% !TeX encoding = latin1
\documentclass[a4paper,11pt,twoside]{amsart}

\usepackage[latin1]{inputenc}
\usepackage{amsmath, amsfonts, amssymb,amsthm}
\usepackage[mathscr]{eucal} %Proporciona el comando
                            %\mathscr para fuentes ecual en
                            % modo matemático sin
                            % sobreescribir el
                            % comando \mathcal
%\usepackage{a4wide}
\usepackage[pdftex]{graphicx}
\usepackage{color}

\usepackage[T1]{fontenc}
\usepackage[sc]{mathpazo}
\linespread{1.05} % Palatino needs more leading (space between lines)
\usepackage[all]{xy}
\usepackage{hyperref}
\usepackage[flenq]{nccmath}

\usepackage {anysize}
\marginsize{2cm}{2cm}{2cm}{3cm}

\usepackage{enumerate}

%Pacote que mostra o label

%\usepackage{showkeys}

\definecolor{alert}{rgb}{0.4,0.9,0.4}

% 	COMANDOS

\newcommand{\R}{\mathbb{R}}

\newcommand{\s}{\mathbb{S}}
\newcommand{\h}{\mathbb{H}}

\newcommand{\fl}{\longrightarrow}
\newcommand{\oz}{\overline{z}}

\newcommand{\m}{M^2(\epsilon)}

\newcommand{\real}{\mathbb{R}}

\newcommand{\me}{M^2(\epsilon)\times\R}
%\newcommand{\Nil}{\mathrm{Nil}_3}
%\newcommand{\Sol}{\mathrm{Sol}_3}

%	ENTORNOS
\newtheorem{theorem}{Theorem}[section]
\newtheorem{proposition}[theorem]{Proposition}

\newtheorem{lemma}[theorem]{Lemma}

\theoremstyle{definition}
  \newtheorem{definition}[theorem]{Definition}

\theoremstyle{remark}
\newtheorem{remark}[theorem]{Remark}

\numberwithin{equation}{section}

\title[Weingarten type surfaces]{Weingarten type surfaces in $\h^2\times\R$ and $\s^2\times\R$}

\author{Abigail Folha and Carlos Pe\~{n}afiel}
\address{}
\email{}

\thanks{This work was partially supported by CNPq, Conselho Nacional de Desenvolvimento Cient{\'i}fico e Tecnol{\'o}gico - Brazil.}

\subjclass[2000]{Primary 53C42; Secondary 53C30}

\keywords{}

\setlength{\parskip}{0.4em}

%\hyphenation{}
\hyphenation{ortho-go-nal immer-sion ne-ce-ssarily corres-pon-den-ce me-tric Rie-manni-an ha-ving suppo-sing su-pposing cons-tant ne-ga-ti-ve}

\begin{document}

\begin{abstract}
In this article, we study complete surfaces $\Sigma$, isometrically immersed  in the product space $\h^2\times\R$ or $\s^2\times\R$ having positive extrinsic curvature $K_e$. Let $K_i$ denote the intrinsic curvature  of $\Sigma$. Assume  that the equation $aK_i+bK_e=c$ holds for some real constants $a\neq0$, $b>0$ and $c$. The main result of this article state that when such a surface is a topological sphere it is rotational. 
 \end{abstract}

\maketitle

\section{Introduction}
The Hopf's holomorphic quadratic differential form, defined on surface having constant mean curvature in $\R^3$, enables Hopf to give a proof that topological spheres in $\R^3$ having constant mean curvature are rotational. A few years ago, Abresch and Rosenberg (see  \cite{AR}, \cite{UR}) discovered a holomorphic quadratic differential form on constant mean curvature surfaces in the homogeneous 3-manifolds. With the aid of this quadratic form, they extended the Hopf's result to constant mean curvature topological spheres immersed in such homogeneous spaces.

In the product spaces $\h^2\times\R$ and $\s^2\times\R$, Aledo, Espinar and G\'{a}lvez \cite{AEG1}, associated a holomorphic quadratic differential form to constant intrinsic curvature (Gaussian curvature) surfaces immersed in such product spaces, which enabled them to extend the classical Liebmann Theorem, that in the euclidean space $\R^3$ ensure that, the round spheres are the unique complete surfaces of positive constant intrinsic curvature. For complete surfaces immersed in $\h^2\times\R$ or $\s^2\times\R$ having positive extrinsic curvature, G\'{a}lvez, Espinar and Rosenberg \cite{EGH}, proved that such surfaces are embedded and homeomorphic to either the euclidean sphere $\s^2$ or to the euclidean plane $\R^2$. Moreover, they construct a quadratic differential form on positive constant extrinsic surfaces which vanishes identically or its  zeros are isolated with negative index. As a consequence, they proved that the complete immersions having positive constant extrinsic curvature in the product spaces $\h^2\times\R$ and $\s^2\times\R$ are  rotational sphere.

In this article, we consider complete surfaces $\Sigma$, isometrically immersed in the product spaces $\h^2\times\R$ or $\s^2\times\R$ having positive extrinsic curvature (non-constant) such that, 
$$aK_i+bK_e=c,$$
where $K_i$, $K_e$ are the intrinsic and the extrinsic curvatures, respectively, and $a\neq0$, $b>0$ and $c$ are real constants. Our goal is to prove that if $\Sigma$ is a topological sphere, then $\Sigma$ is rotational. In order to obtain this result, we first construct a quadratic differential form $Qdz^2$ which vanishes identically or its zeros are isolated with negative index, this quadratic form exists if $a+b\neq0$, $2a+b\neq0$, see subsection \ref{quadratic}. Moreover, we obtain vertical and horizontal height estimates which enable us to realize when $\Sigma$ is a topological sphere, see Section \ref{HVE} and Section \ref{PES}. In Section \ref{SMT}, we prove the main theorem.

The article is organized as follows: In section \ref{preliminares}, we give the definition of Weingarten type surfaces.  Section \ref{rotational} is devoted to the study of rotational examples. In section \ref{HVE}, we construct a quadratic differential form on a Weingarten type surface which vanishes identically or its zeros are isolated with negative index. Also, we stablish horizontal and vertical height estimates. In Section \ref{PES} we study the non-existence of properly embedded surfaces having finite topology a one top end. In section \ref{SMT}, we prove the main theorem of this article.

Finally, the authors would like to thank Jos\'{e} Antonio G\'{a}lvez for his guidance and encouragement during the preparation of this paper.

\section{Weingarten type Surfaces having positive extrinsic curvature}\label{preliminares}

For $\epsilon\in\{-1,1\}$, we denote by $\m$ the complete, connected, simply-connected, 2-dimensional space form having sectional curvature $\epsilon$. That is, for $\epsilon=1$, $\m$ label the canonical euclidean unit sphere $\s^2$ and for $\epsilon=-1$, $\m$ denotes the complete, connected, simply-connected hyperbolic plane $\h^2$ having sectional curvature $-1$. Also, we denote by $\me$ the product space (where $\R$ is the real line), endowed with the product metric.

Recall that, the surface $\Sigma$ is called a Weingarten surface if its two principal curvatures $k_1$ and $k_2$ are not independent one of another or, equivalently, if there exist a relation of the form $W(k_1,k_2)=0$ for a smooth real function $W:\mathcal{D}\subset\R^2 \fl\R$ defined on a set $\mathcal{D}$.

In this article, we study complete, connected, surfaces $\Sigma$ isometrically immersed in the product space $\me$ whose intrinsic and extrinsic curvature are linearly related, more precisely,

\begin{definition}\label{d1}
Let $\varphi : \Sigma\fl \me$ be an isometric immersion from a connected surface having intrinsic curvature $K_i$ and extrinsic curvature $K_e$. We say that  $\Sigma$ is a Weingarten type surface or simply a $W$-surface if there exist three real numbers, $a\neq0$, $b>0$ and $c$ such that,
\begin{equation}\label{DE1}
aK_i+bK_e-c=0.
\end{equation}
\end{definition}
\begin{remark}
The assumption $b>0$ is not a restriction since we can multiply by -1 the equation \ref{DE1} if necessary. 
\end{remark}

For simplicity, we treat properties of an immersion $\varphi$ as those of $\Sigma$ and denote merely by $\Sigma$ the image $\varphi(\Sigma)$. For example, we call $\Sigma$ a W-surface in $\me$
instead of saying that the immersion $\varphi$ is a W-surface in $\me$.

Let $\varphi : \Sigma\fl \me$ be an isometric immersion from an orientable surface $\Sigma$ into the product space $M^2(\epsilon)\times\R$. We chose a global unit normal vector field $N$ and as usual, we denote by $\nu=\langle N,\frac{\partial}{\partial t}\rangle$ the angle function of $\Sigma$, here $\frac{\partial}{\partial t}$ denotes the tangent vector field to the real line $\R$. From \cite{Benoit}, we have that the Gauss equation for such an immersed surface into the product space $\me$ is given by
\begin{equation}\label{G1}
K_i=K_e+\epsilon\nu^2.
\end{equation}

As a consequence of the Gauss equation, we have

\begin{lemma}\label{c12}
Let $\varphi : \Sigma\fl \me$ be an isometric immersion. Assume that $\Sigma$ is a complete W-surface having positive extrinsic curvature $K_e$. Then: 
\begin{enumerate}
\item Suppose $a+b>0$, then
\begin{itemize}
\item[i)] If $c\le0$, $\Sigma$ is not compact.
\item[ii)] If $\epsilon=-1$ and $c>b$, $\Sigma$ is closed.
\item[iii)] If $\epsilon=1$ and $c>0$, $\Sigma$ is closed.
\end{itemize}
\item Suppose $a+b<0$, then
\begin{itemize}
\item[i)] If $c\ge0$, $\Sigma$ is not compact.
\item[ii)] If $\epsilon=-1$ and $c<0$, $\Sigma$ is closed.
\item[iii)] If $\epsilon=1$ and $c<-b$, $\Sigma$ is closed.
\item[iv)] If $\epsilon=1$ and $-b\le c<0$, $\Sigma$ cannot be closed.
\end{itemize}
\item For $a+b=0$, the angle function is constant.
\end{enumerate}
\end{lemma}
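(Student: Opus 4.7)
The plan is to combine the Weingarten relation $aK_i+bK_e=c$ with the Gauss equation \eqref{G1} to express both curvatures as affine functions of $\nu^2$, and then read off each item by an elementary case analysis. Eliminating $K_i$ and then $K_e$ yields
\[
(a+b)K_e = c - a\epsilon\nu^2, \qquad (a+b)K_i = c + b\epsilon\nu^2.
\]
Item (3) is immediate from this derivation: when $a+b=0$ the same manipulation gives $a\epsilon\nu^2 = c$, so $\nu^2$ is constant on $\Sigma$, and by connectedness so is $\nu$.

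For the non-compactness assertions (1i) and (2i) I would use the following topological fact. If $\Sigma$ were closed, the height function $h=\langle\varphi,\partial_t\rangle$ would attain both a maximum and a minimum, and at each such critical point $\nabla h=0$ forces $N=\pm\partial_t$, so $\nu$ takes both values $+1$ and $-1$; by continuity on the connected $\Sigma$, $\nu$ vanishes at some interior point $p$. At $p$ the first identity becomes $(a+b)K_e(p)=c$, which is incompatible with $K_e(p)>0$ in either situation (1i) where $a+b>0$ and $c\le 0$, or (2i) where $a+b<0$ and $c\ge 0$.

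The four closedness statements (1ii), (1iii), (2ii), (2iii) all follow from a single observation: use $\nu^2\in[0,1]$ in the second identity to bound $K_i$ below by a positive constant, and apply Bonnet--Myers. In (1ii), for instance, $c-b\nu^2\ge c-b>0$ together with $a+b>0$ gives $K_i\ge (c-b)/(a+b)>0$; the other three subcases produce analogous positive lower bounds. The complete surface is then compact.

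Item (2iv) is the subtlest: the sign test at a point where $\nu=0$ is inconclusive there, so instead I would evaluate the first identity at a maximum of $h$, where $\nu^2=1$, obtaining $(a+b)K_e=c-a$. The hypotheses $a+b<0$ and $-b\le c$ give $-a>b>0$ and hence $c-a>c+b\ge 0$, whereas $(a+b)K_e$ must be negative; this contradicts $K_e>0$. The main obstacle is therefore isolating this last sub-case, in which one has to look at the extrema of the height function rather than at a zero of $\nu$; the remainder of the proof is pure algebra combined with Bonnet--Myers.
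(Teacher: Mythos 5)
Your proof is correct and follows essentially the same route as the paper's: you eliminate via the Gauss equation \eqref{G1} to obtain $(a+b)K_e=c-\epsilon a\nu^2$ and $(a+b)K_i=c+\epsilon b\nu^2$, apply Bonnet--Myers for the closedness items, deduce the non-compactness items from a point where $\nu=0$, and settle case (2iv) at a point with $\nu^2=1$ exactly as the paper does (the identical computation $K_e(p)=(c-a)/(a+b)<0$ appears in the proof of Proposition \ref{p1}). The only step you state too quickly --- that $\nu$ attains \emph{both} values $\pm1$ and hence vanishes somewhere --- follows from the compatibility equation $\nabla_XT=\nu\mathcal{A}X$, which at a critical point of $h$ gives $\mathrm{Hess}\,h=\nu\, II$, so positive definiteness of $II$ forces $\nu=-1$ at the maximum and $\nu=+1$ at the minimum; since the paper itself asserts the existence of a zero of $\nu$ on a compact surface without proof, your argument is no less complete than the original.
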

\begin{proof}
As the extrinsic curvature of the W-surface is positive,  $\Sigma$ is orientable and we choose the unit global normal vector field such that the second fundamental form is definite positive. 

For $\h^2\times\R$, if $a+b<0$, it is clear that for $c<0$, the intrinsic curvature satisfies $K_i\ge\frac{c}{a+b}>0$. Then, from the Bonnet-Myers theorem, $\Sigma$ must be compact. 
On the other hand, for  $a+b<0$ and $c\ge0$, if $\Sigma$ were compact, there would exist a point $p\in\Sigma$ such that $\nu(p)=0$, it would imply that the extrinsic curvature satisfies $K_e(p)\le0$, which contradicts our assumption. The proof of the other cases are similar.

From equation \eqref{DE1} and \eqref{G1}, we conclude that $a+b=0$ implies that the angle function is constant. 
\end{proof}
\begin{remark}
Since surfaces having constant angle were treated in \cite{D1} and \cite{D2}, from now on, we omit this case.
\end{remark}

\section{Complete rotational surfaces of Weingarten type in $M^2(\epsilon)\times\R$}\label{rotational}

In this section, we deal with complete $W$-surfaces having positive extrinsic curvature, which are invariant by one-paremeter group of rotations of the ambient space $M^2(\epsilon)\times\R$.

For $\epsilon\in\{-1,1\}$, let us consider the 4-dimensional space $\mathbb{R}^3_\epsilon\times\R$, endowed with the metric $ds^2=\epsilon dx_1^2+dx_2^2+dx_3^2+dx_4^2$. And, let us identify the product space $M^2(\epsilon)\times\R$ as being the sub-manifold of $\mathbb{R}^3_\epsilon\times\R$, given by
$$M^2(\epsilon)\times\R=\{(x_1,x_2,x_3,x_4)\in \mathbb{R}^3_\epsilon\times\R:\epsilon x_1^2+x_2^2+x_3^2= \epsilon, \hspace{.1cm} \textnormal{and if} \hspace*{.2cm} \epsilon=-1, \hspace{.2cm} x_1>0\}.$$

The rotation in $\me$ is a subgroup of the isometry group of $\me$ which preserves the orientation and fixes an axis $\{p\}\times\R$, with $p\in M^2(\epsilon)\times\{0\}$. This subgroup can be identify with the special orthogonal group $SO(2)$. Up to isometries, we can assume that the axis is given by $\{(1, 0, 0)\}\times\R$.

We consider the plane $\Pi=\{(x_1,x_2,0,x_4)\in M^2(\epsilon) \times\R,x_2\ge0\}$ and the curve 
\begin{equation*}
\alpha_\epsilon(u)=\left\{
\begin{array}{lcr}
(\cosh k(u), \sinh k(u), 0, h(u)) & \subset & \Pi, \hspace{.3cm} \textnormal{if} \hspace{.3cm} \epsilon=-1, \vspace{.3cm}\\
(\cos k(u), \sin k(u), 0, h(u)) & \subset & \Pi, \hspace{.5cm} \textnormal{if} \hspace{.6cm} \epsilon=1.
\end{array}
\right.
\end{equation*}
Where $k(u)\ge0$ and $u$ is the arclength of $\alpha$, that is, $(k^\prime(u))^2+(h^\prime(u))^2=1$. Here $k^\prime(u)$ denotes the derivative with respect to the variable $u$.

In order to obtain a rotational surface, we apply one-parameter group of rotational isometries to the curve $\alpha_\epsilon$. Denoting by $\mathcal{S}$ such a generated surface, we can parametrized  $\mathcal{S}$ by
\begin{equation*}
\varphi_\epsilon(u,v)=\left\{
\begin{array}{lcr}
(\cosh k(u), \sinh k(u) \cos v, \sinh k(u) \sin v, h(u)), &  & \textnormal{if} \hspace{.3cm} \epsilon=-1, \vspace{.3cm}\\
(\cos k(u), \sin k(u) \cos v, \sin k(u) \sin v, h(u)), &  & \textnormal{if} \hspace{.5cm} \epsilon=1.
\end{array}
\right.  
\end{equation*}

In order to simplify the expressions, we define the functions
\begin{equation*}
\cos_\epsilon k=\left\{
\begin{array}{lcr}
\cosh k,  &  & \textnormal{if} \hspace{.3cm} \epsilon=-1 \vspace{.3cm}\\
\cos k, &  & \textnormal{if} \hspace{.5cm} \epsilon=1
\end{array}
\right.   \hspace{.4cm} \textnormal{and} \hspace{.4cm}
\cot_\epsilon k=\left\{
\begin{array}{lcr}
\coth k,  &  & \textnormal{if} \hspace{.3cm} \epsilon=-1 \vspace{.3cm}\\
\cot k, &  & \textnormal{if} \hspace{.5cm} \epsilon=1.
\end{array}
\right.
\end{equation*}

\subsection{The first integral.}\label{HSphere} 
The aim of this section is to classify complete rotational W-surfaces having positive extrinsic curvature. A straightforward computation gives us that the intrinsic and extrinsic curvature function of an isometrically immersed surface which is invariant by rotational isometries in the space $\me$ are given by
\begin{eqnarray*}
K_i & = & \epsilon (k^\prime(u))^2 - k^{\prime\prime}(u) \cot_\epsilon k(u), \\
K_e & = & -k^{\prime\prime}(u) \cot_\epsilon k(u).
\end{eqnarray*}
The Weingarten equation is written as
\begin{equation}\label{e21}
(a+b)k^{\prime\prime}(u) \cot_\epsilon k(u)-\epsilon a (k^\prime(u))^2 =-c,
\end{equation}
for real numbers $a\neq0$, $b>0$ and $c$ satisfying $a+b\neq0$. It is direct to check that the first integral of the ordinary differential equation (ODE in short) \eqref{e21} is
\begin{equation}\label{e22}
(k^\prime(u))^2=\epsilon\dfrac{c}{a}+C_1(\cos_\epsilon k(u))^{-\frac{2a}{a+b}}
\end{equation}
for some constant $C_1$. Moreover, we can assume that  $\alpha$ cuts the axis orthogonally at $t = 0$. Then $k(0) = 0$ and $k^\prime(0) = 1$, in this case, the first integral is given by
\begin{equation}\label{e23}
(k^\prime(u))^2=\epsilon\dfrac{c}{a}+\dfrac{a-\epsilon c}{a}(\cos_\epsilon k(u))^{-\frac{2a}{a+b}}.
\end{equation}
Notice that, the problem of find all complete rotational W-surfaces which cut the axis orthogonally, consist in determine all the admissible expressions of the profile curve $\alpha_\epsilon$, we mean, we wish to find all the possible compact (and non-compact) integral curves of the ODE system
\begin{equation}\label{e24}
\left\{
  \begin{array}{rcl}
    (k^\prime(u))^2 -\epsilon\dfrac{c}{a} &=& \dfrac{a-\epsilon c}{a}(\cos_\epsilon k(u))^{-\frac{2a}{a+b}}, \vspace{.3cm} \\ 
    (k^\prime(u))^2+(h^\prime(u))^2 &=& 1.
  \end{array}
\right.
\end{equation}
We have the next proposition.

\begin{proposition}\label{p1}
Let $\mathcal{S}$ be a complete W-surface isometrically immersed into the product space $\me$ having positive extrinsic curvature $K_e$ which is invariant by rotational isometries and whose generating curve $\alpha_\epsilon$ cuts the rotation axis orthogonally. Assume that $a+b\neq0$, then: 
\begin{enumerate}
\item For $a+b>0$, there are two cases,
\begin{itemize}
\item If $c>0$, $\mathcal{S}$ is a rotational topological sphere.
\item If $c\le0$, $\mathcal{S}$ homeomorphic to $\R^2$.
\end{itemize}
\item For $a+b<0$, there are two cases,
\begin{itemize}
\item If $c\ge0$, $\mathcal{S}$ is homeomorphic to $\R^2$.
\item In $\h^2\times\R$, if $c<0$, $\mathcal{S}$ is a rotational topological sphere.
\item In $\s^2\times\R$, if $c<-b$, $\mathcal{S}$ is a rotational topological sphere. 
\item In $\s^2\times\R$, if $-b\le c<0$, there is no rotational surface $\mathcal{S}$ whose generating curve cuts orthogonally the rotation axis.
\end{itemize}
\end{enumerate}
\end{proposition}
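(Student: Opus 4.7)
The plan is to reduce the classification to a phase-plane analysis of the first integral \eqref{e23}, which I would rewrite as $(k'(u))^2 = G(k(u))$ with
\begin{equation*}
G(k) := \epsilon\,\tfrac{c}{a} + \tfrac{a-\epsilon c}{a}\,(\cos_\epsilon k)^{-\frac{2a}{a+b}}, \qquad G(0)=1.
\end{equation*}
Since \eqref{e23} is autonomous in $k$, the profile curve is symmetric about any critical point. Hence $\mathcal{S}$ is a topological sphere exactly when $G$ has a smallest zero $k^{\ast}>0$ in its admissible interval: $k(u)$ rises from $0$ to $k^{\ast}$ with $k'(0)=1$, reaches a horizontal tangent, and by the reflection symmetry of the ODE returns to $0$, closing up the profile. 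If instead $G(k)>0$ throughout the admissible interval (which is $[0,\infty)$ in $\h^2\times\R$ and $[0,\pi/2)$ in $\s^2\times\R$), the profile escapes to infinity and $\mathcal{S}\cong\R^2$; and if the arclength constraint $(k')^2\le 1$ is incompatible with $(k')^2=G(k)$, no such surface exists.

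To execute the analysis, I would first differentiate \eqref{e23} once and simplify, using $\tfrac{d}{dk}\cos_\epsilon k = -\epsilon\sin k$ for $\epsilon=1$ and $=\sinh k$ for $\epsilon=-1$, to obtain the closed form
\begin{equation*}
K_e = -\frac{\epsilon(a-\epsilon c)}{a+b}\,(\cos_\epsilon k)^{-\frac{2a}{a+b}},
\end{equation*}
valid wherever $\cos_\epsilon k>0$. The hypothesis $K_e>0$ therefore amounts to the sign condition $\epsilon(a-\epsilon c)(a+b)<0$, and moreover the formula shows that at any interior zero $k^{\ast}$ of $G$ we have $K_e(u^{\ast})>0$, hence $k''(u^{\ast})\ne 0$, so the reflection at the top of the sphere is genuinely smooth. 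With these tools in hand I would then split on the sign of $a+b$ and on $\epsilon$, subdividing further according to $c$, and in each sub-case read off the monotonicity of $G$ and its limit as $k$ approaches the right endpoint of its admissible interval: since the coefficient $\tfrac{a-\epsilon c}{a}$ and the exponent $-\tfrac{2a}{a+b}$ have explicit signs in each regime, this is essentially a mechanical check discriminating between sphere, plane, and (in one case) non-existence.

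The genuinely delicate sub-case is 2(iv): $\epsilon=1$, $a+b<0$, $-b\le c<0$. Here $a<-b\le c<0$ forces both $\tfrac{c}{a}$ and $\tfrac{a-c}{a}$ to be strictly positive while the exponent $-\tfrac{2a}{a+b}$ is strictly negative, so $G$ is strictly increasing on $[0,\pi/2)$ from $G(0)=1$ to $+\infty$. In particular $G(k)>1$ for every $k>0$, which is incompatible with the arclength relation $(k')^2+(h')^2=1$; hence no complete rotational $W$-surface whose profile leaves the axis can exist in this regime. The main obstacle throughout the argument is just the combinatorial bookkeeping of sign sub-cases; the two genuine conceptual subtleties---the smoothness at $k^{\ast}$ in the sphere case and the non-existence in case 2(iv)---are both clinched by the closed-form expressions above for $G$ and $K_e$.
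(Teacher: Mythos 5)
Your strategy is sound and genuinely different from the paper's route. The paper does not run a phase-plane analysis in every regime: it first quotes the topological dichotomy for complete surfaces with $K_e>0$ (\cite[Theorem 3.1]{EGH}, \cite[Theorem 2.4]{RE}) together with Lemma~\ref{c12}, which settles all cases except two, namely $\epsilon=-1$, $a+b>0$, $c>0$ (where it proves the profile closes by estimating $h$ as a function of $k$ near the vertical tangent at $k_0$ and checking $C^2$ regularity there) and $\epsilon=1$, $a+b<0$, $-b\le c<0$ (where it simply evaluates $K_e=(c-a)/(a+b)<0$ at the axis point, where $\nu^2=1$). Your closed form $K_e=-\frac{\epsilon(a-\epsilon c)}{a+b}(\cos_\epsilon k)^{-\frac{2a}{a+b}}$ is correct (at $k=0$ it reproduces \eqref{ae1} with $\nu^2=1$), your case 2(iv) argument via $G>1$ versus the arclength constraint is correct and equivalent to the paper's axis-point argument, and your reflection argument in the arclength parameter --- symmetry of the reversible autonomous ODE about a simple zero $k^{\ast}$ of $G$, with $k''(u^{\ast})\neq0$ because $K_e>0$ and $\cot_\epsilon k^{\ast}\neq0$, and finite arclength to reach $k^{\ast}$ because the zero is simple --- is arguably cleaner than the paper's computation \eqref{e25}--\eqref{e28}, since in the parameter $u$ both $k$ and $h'=\pm\sqrt{1-(k')^2}$ remain smooth through $u^{\ast}$ and the vertical-tangent coordinate singularity never appears.

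There is, however, a genuine gap in what you call the mechanical part: in $\s^2\times\R$ the admissible interval is not $[0,\pi/2)$. Whenever the exponent $-\frac{2a}{a+b}$ is positive and $a<c<0$ (take $a=-1$, $b=3$, $c=-\tfrac12$, so $a+b>0$ and $K_e>0$ near the axis, with $G(k)=\tfrac12+\tfrac12\cos k$), one gets $G(\pi/2)=c/a\in(0,1)$: the profile reaches the equator in finite arclength with $k'\neq0$ and must continue past it, where $(\cos k)^{-\frac{2a}{a+b}}$ is undefined for a non-integer exponent and the first integral has to be re-derived with $\vert\cos k\vert$ and a possibly different integration constant; moreover ``escapes to infinity'' is meaningless since $k\le\pi$, and in the displayed example $K_e\propto\cos k$ turns negative beyond the equator, so the correct conclusion may be vacuity rather than a plane --- your claimed dichotomy would wrongly assert $\mathcal{S}\cong\R^2$ via an escaping profile. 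A similar borderline occurs for $c=0$, $\epsilon=1$, $a+b>0$, where one must distinguish finite arclength to the equator (forcing $K_e\to0$, ruling the surface out) from infinite arclength (a complete plane asymptotic to the equatorial cylinder) according to whether $-\frac{2a}{a+b}<2$. So your framework fails precisely in case (1) with $c\le0$ in $\s^2\times\R$, exactly the regimes the paper disposes of by Lemma~\ref{c12} plus the classification of \cite{EGH} and \cite{RE}. The fix is either to carry the phase-plane analysis across the equator (and verify $G\le1$ along the way, which in the surviving cases follows from monotonicity of $G$ with $G(0)=1$), or to import those two results, as the paper does, and reserve your ODE analysis for the two genuinely delicate cases, where it works and is an improvement.
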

\begin{proof} It is known that complete surfaces, isometrically immersed in the product spaces $\me$, having positive extrinsic curvature are  homeomorphic  either to a sphere or to the euclidean plane $\R^2$, see \cite[Theorem 3.1]{EGH}, and \cite[Theorem 2.4]{RE}. By Lemma \ref{c12}, in order to prove the proposition, we just need to consider two cases. The first one is $\epsilon=-1$, $a+b>0$ and $c>0$, and the second is $\epsilon=1$, $a+b<0$ and $-b\le c<0$. 

From \eqref{e24}, for $\epsilon=-1$, $a+b>0$ and $c>0$, we have
\begin{equation}\label{e25}
\left(\dfrac{dh}{dk}\right)^2=\left(\dfrac{a+c}{c}\right) \dfrac{(\cosh k)^{\frac{2a}{a+b}}-1}{\left(\sqrt{\frac{a+c}{c}}+(\cosh k)^{\frac{a}{a+b}}\right)\left(\sqrt{\frac{a+c}{c}}-(\cosh k)^{\frac{a}{a+b}}\right)}.
\end{equation}
That is, we consider the function $h=h(k)$ as a function of the variable $k$; notice that $k$ is the hyperbolic distance to the origin in the slice $\h^2\times\{0\}$. This function is defined on the interval $[0,k_0]$, where $k_0$ satisfies
$$(\cosh k_0)^{\frac{a}{a+b}}=\sqrt{\dfrac{a+c}{c}}.$$
The graph of the function $h$ has a vertical tangent line at $k_0$. In order to obtain a rotational topological sphere, we need to show that the height function $h(k)$ is bounded and it is of class $C^2$ at $k=k_0$.

Up to isometries of the ambient space, we can assume that $\dfrac{dh}{dk}\ge0$. We separate the proof in two cases, depending on the sign of $a$ (recall we are assuming that $a\neq0$).
\begin{itemize}
\item[i)] If $a>0$, then $\dfrac{2a}{a+b}>0$ and $(\cosh k)^{\frac{2a}{a+b}}-1>0$. For $k>0$, we set 
$$A_1(k)=\sqrt{\left(\dfrac{a+c}{c}\right) \dfrac{(\cosh k)^{\frac{2a}{a+b}}-1}{\left(\sqrt{\frac{a+c}{c}}+(\cosh k)^{\frac{a}{a+b}}\right)}},$$
thus, equation \eqref{e25} implies
\begin{equation}\label{e38}
\dfrac{dh}{dk}= A_1(k)\dfrac{1}{\sqrt{\sqrt{\frac{a+c}{c}}-(\cosh k)^{\frac{a}{a+b}}}}\dfrac{2(\frac{-a}{a+b})(\cosh k)^{\frac{-b}{a+b}}\sinh k}{2(\frac{-a}{a+b})(\cosh k)^{\frac{-b}{a+b}}\sinh k}
\end{equation}
moreover, if we consider the function
$$A_2(k)=A_1(k)\dfrac{2}{(\frac{a}{a+b})(\cosh k)^{\frac{-b}{a+b}}\sinh k},$$
we can write equation \eqref{e38} as
\begin{equation}\label{e26}
\dfrac{dh}{dk}= A_2(k)\dfrac{(-1)(\frac{-a}{a+b})(\cosh k)^{\frac{-b}{a+b}}\sinh k}{2\sqrt{\sqrt{\frac{a+c}{c}}-(\cosh k)^{\frac{a}{a+b}}}}=A_2(k)\dfrac{d}{dk}\sqrt{\sqrt{\frac{a+c}{c}}-(\cosh k)^{\frac{a}{a+b}}}.
\end{equation}
Notice that $A_1(k)$ and $A_2(k)$ are bounded functions on the iterval $[0,k_0]$, then, for each $0<\delta<k_0$, there exist a positive number $M>0$, such that, for all $k\in[k_0-\delta,k_0]$, we have
\begin{equation}\label{e27}
\dfrac{dh}{dk}\le -M\dfrac{d}{dk}\sqrt{\sqrt{\frac{a+c}{c}}-(\cosh k)^{\frac{a}{a+b}}}.
\end{equation}
Integrating \eqref{e27}, there exists a constant $C_1$ large enough, such that
\begin{equation}\label{e28}
0<h(k)\le M\left(C_1-\sqrt{\sqrt{\frac{a+c}{c}}-(\cosh k)^{\frac{a}{a+b}}}\right).
\end{equation}
The function $h=h(k)$ is bounded in $[k_0-\delta,k_0]$. From equation \eqref{e25} its graph has a vertical tangent line at $k=k_0$, a straightforward computation gives that the  function $h$ is of class $C^2$ at $k=k_0$, that is, its graph  has bounded curvature at $k=k_0$. So after a reflection about the slice $t=h(k_0)$, we obtain a complete rotational topological sphere.
\item[ii)] The proof for the case $a<0$ is analogous, taking into account that in this case $\dfrac{2a}{a+b}<0$ and $(\cosh k)^{\frac{2a}{a+b}}-1<0$.
\end{itemize}

For the case $\s^2\times\R$, assume $a+b<0$ and $-b\le c<0$. If $\mathcal{S}$ were a rotational surface whose generating curve cuts orthogonally the rotation axis, there would exist a point $p\in\mathcal{S}$ such that $\nu^2(p)=1$. Our assumption on $a,b$ and $c$ implies $a<-b\le c$, that is $c-a>0$. Thus in such a point $p\in\mathcal{S}$, we would have $K_e(p)=\dfrac{c-a}{a+b}<0$, a contradiction. This complete the proof. 
\end{proof}

\section{Vertical and Horizontal Height Estimates}\label{HVE}

In this section we consider a W-surface $\Sigma$ isometrically immersed in $\me$, having positive extrinsic curvature. Once the extrinsic curvature is positive, the surface is orientable and we orient $\Sigma$ in such way that the second fundamental form is positive definite. Let $z$ be a conformal local parameter for the second fundamental form, in this parameter the first and second fundamental form of $\Sigma$ are written as 
\begin{eqnarray}
\label{ae6}I&=& E dz^2+2 F \vert dz\vert^2+ \overline{E}d\overline{z}^2; \\
\label{ae7}II&=& 2 \rho \vert dz\vert^2,
\end{eqnarray}
where $\rho$ is a positive function and  $\overline{z}$ denotes the conjugate of $z$. The extrinsic curvature of $\Sigma$ is given by $K_e=\dfrac{\rho^2}{D}$, where $D=  F^2-\vert E\vert^2>0$, and we denote by $K_i$ the intrinsic curvature of the surface.

\subsection{Some basic equations} In this subsection we compute some equations which will be necessary to achieve the classification of W-surfaces.  

\begin{lemma}\label{al1}
Let $\varphi:\Sigma \fl \me$ be an isometric immersion in $\me$. Assume $\Sigma$ is a  W-surface having positive extrinsic curvature and that $a+b$ is different from zero. Let $N$ be the global unit normal vector field such that the second fundamental form of $\Sigma$ is positive definite and $z$ be a conformal parameter for the second fundamental form. Then, the following equations are satisfied: 
\begin{eqnarray}
\label{ae1} K_e&=&\dfrac{c-\epsilon a \nu^2}{a+b}\\[15pt]
\label{ae2}\dfrac{\rho_{\oz}}{\rho}&=&-\dfrac{\epsilon \nu\alpha}{\rho}-(\Gamma_{12}^1-\Gamma_{22}^2)\ \ \ \ \quad  \textnormal{(Codazzi equation)}\\[15pt]
\label{ae3} h_{z\oz}&=&\nu\rho+\Gamma_{12}^1h_z+\Gamma_{12}^2h_{\oz}\\[15pt]
\label{ae4} h_{zz}&=&\Gamma_{11}^1h_z+\Gamma_{11}^2h_{\oz}\\[15pt]
\label{ae13}\nu_{\oz}&=&-\dfrac{\alpha K_e}{\rho}\\[15pt]
\label{ae5} \vert T\vert^2&=& 1-\nu^2=\dfrac{1}{D}(\alpha h_z+\overline{\alpha} h_{\oz})\\
\label{ae27}\vert h_z\vert^2&=&-\dfrac{\vert\alpha\vert^2}{D}+ F\vert T \vert^2,
\end{eqnarray}
where, $\Gamma_{ij}^k, \ i.j,k=1,2,3$ are the Christoffel symbols associated to $z$; \  $E,F, \rho$ are terms of the fist and second fundamental forms given by equations \eqref{ae6} and \eqref{ae7}, and 
\begin{eqnarray}
\label{ae10}\alpha&:=& F  h_{\oz}-\overline{E} h_z\\
\label{ae11} D&:=& F^2-\vert E\vert^2\\
\label{ae00}T&=&\dfrac{1}{D}(\alpha \, \partial_z+\overline{\alpha}\,\partial_{\oz}).
\end{eqnarray}
\end{lemma}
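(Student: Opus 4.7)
The plan is to derive the seven displayed equations from the standard local machinery in conformal coordinates for the second fundamental form, using the Gauss, Codazzi and Weingarten equations adapted to the product ambient $\me$.

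First, I would obtain equation \eqref{ae1} by pure algebra: the Gauss equation \eqref{G1} gives $K_i=K_e+\epsilon\nu^2$, and substituting this in the Weingarten relation $aK_i+bK_e=c$ yields $(a+b)K_e=c-\epsilon a\nu^2$, which is admissible because $a+b\neq 0$. Next I would treat equations \eqref{ae3} and \eqref{ae4} simultaneously. In conformal coordinates for $II$ we have $II(\partial_z,\partial_{\oz})=\rho$ and $II(\partial_z,\partial_z)=0$, so the Gauss formula reads
\[
\widetilde\nabla_{\partial_z}\partial_{\oz}=\Gamma_{12}^1\partial_z+\Gamma_{12}^2\partial_{\oz}+\rho N,\qquad \widetilde\nabla_{\partial_z}\partial_z=\Gamma_{11}^1\partial_z+\Gamma_{11}^2\partial_{\oz}.
\]
Taking the ambient inner product with $\partial_t$ and using $\langle\varphi_z,\partial_t\rangle=h_z$, $\langle N,\partial_t\rangle=\nu$ delivers \eqref{ae3} and \eqref{ae4}.

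For equations \eqref{ae5} and \eqref{ae27}, the splitting $\partial_t=T+\nu N$ gives $|T|^2=1-\nu^2$ at once. Writing $T=T^z\partial_z+T^{\oz}\partial_{\oz}$ and imposing $\langle T,\partial_z\rangle=h_z$ and $\langle T,\partial_{\oz}\rangle=h_{\oz}$ produces a $2\times 2$ linear system with determinant $D=F^2-|E|^2$; Cramer's rule yields $T^z=\alpha/D$ with $\alpha=Fh_{\oz}-\overline{E}h_z$, which is \eqref{ae00}. Substituting into $|T|^2=g(T,T)$ and expanding gives the right-hand side of \eqref{ae5}, and computing $|h_z|^2=\langle\partial_t-\nu N,\partial_z\rangle\overline{\langle\partial_t-\nu N,\partial_{\oz}\rangle}$ together with the expression just found for $T$ yields \eqref{ae27} after simplification.

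For equation \eqref{ae13}, I would differentiate $\nu=\langle N,\partial_t\rangle$ with respect to $\oz$ and use Weingarten: $\widetilde\nabla_{\partial_{\oz}}N=-A\partial_{\oz}$, where $A$ is the shape operator. Solving the system $I(A\partial_{\oz},\partial_z)=\rho,\ I(A\partial_{\oz},\partial_{\oz})=0$ gives $A\partial_{\oz}=(\rho/D)(-\overline{E}\partial_z+F\partial_{\oz})$; pairing with $\partial_t$ produces $\nu_{\oz}=-(\rho/D)(Fh_{\oz}-\overline{E}h_z)=-\alpha\rho/D$, which coincides with $-\alpha K_e/\rho$ since $K_e=\rho^2/D$.

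The main obstacle will be the Codazzi equation \eqref{ae2}, which requires Daniel's version of Codazzi in $\me$: the ambient curvature of a product with a $\R$-factor contributes precisely a term proportional to $\epsilon\nu$ times the tangent part of $\partial_t$. Applying this to the pair $(\partial_z,\partial_{\oz})$ and using the conformality of $II$ converts $(\widetilde\nabla_{\partial_{\oz}} II)(\partial_z,\partial_z)-(\widetilde\nabla_{\partial_z} II)(\partial_z,\partial_{\oz})$ into a relation between $\rho_{\oz}$, the Christoffel difference $\Gamma_{12}^1-\Gamma_{22}^2$, and $\epsilon\nu\langle T,\partial_z\rangle$; substituting $\langle T,\partial_z\rangle=\alpha/D\cdot F+\bar\alpha/D\cdot E$ (which collapses to the expected expression in $\alpha$) and dividing by $\rho$ yields \eqref{ae2}. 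Careful bookkeeping of the Christoffel symbols and of the ambient curvature contribution is the delicate point; once this is performed, all seven equations follow.
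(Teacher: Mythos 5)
Your proposal is correct and takes essentially the same route as the paper: each identity is read off from Daniel's compatibility equations for immersions into $\me$ written in the conformal parameter for $II$, and your direct use of the parallelism of $\partial_t$ (for \eqref{ae3}, \eqref{ae4}) and of the explicit components of the shape operator (for \eqref{ae13}) is just an unpacked form of the compatibility equations $\nabla_X T=\nu\mathcal{A}X$ and $\mathrm{d}\nu(X)=-\langle\mathcal{A}X,T\rangle$ that the paper quotes, while your tensorial Codazzi computation is equivalent to the paper's operator version paired with $\partial_{\oz}$. Only two cosmetic index slips should be fixed: since $\overline{h_{\oz}}=h_z$, your expression for $\vert h_z\vert^2$ should be $h_z\,\overline{\langle \partial_t-\nu N,\partial_z\rangle}$ (as written it equals $h_z^2$), and $\langle T,\partial_z\rangle=\frac{1}{D}\left(E\alpha+F\overline{\alpha}\right)=h_z$, not $\frac{1}{D}\left(F\alpha+E\overline{\alpha}\right)$.
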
 

\begin{proof} This lemma is similar to \cite[Lemma 3.1]{AEG},  for completeness we present a proof here.  The idea is to write the compatibility equations in terms of the conformal parameter $z$. The compatibility equations for immersions in $\me$ are described in \cite{Benoit}. 

Let $\pi_2:\me\fl\real$, $\pi_2(p,t)=t$ be the projection on the second factor. We write $\dfrac{\partial}{\partial_t}=T+\nu N$, where $T$ is a tangent vector field to $\Sigma$. Once $\dfrac{\partial}{\partial_t}$ is the gradient in $\me$ of the function $\pi_2$, the vector field $T$, tangent to $\Sigma$, is the gradient of the height function $h:=\pi_2\vert_{\Sigma}$.  Then,   
\begin{equation*}
T=\dfrac{1}{D}(\alpha \, \partial_z+\overline{\alpha}\,\partial_{\oz}).
\end{equation*}

Observe that $\vert T\vert^2=1-\nu^2$, and after a direct computation we obtain equation \eqref{ae5}. On the other hand, by equation \eqref{ae10}  $h_z=\dfrac{1}{D}(E \,\alpha+ F\,\overline{\alpha})$, then
\begin{eqnarray*}
\vert h_z\vert^2&=& \dfrac{1}{D^2}(\vert\alpha\vert^2(\vert E\vert^2+F^2)+ F(E\alpha^2+\overline{E}\,\overline{\alpha}^2))\\[15pt]
&=&-\dfrac{\vert \alpha\vert^2}{D}+ \dfrac{F}{D^2}(E \alpha^2+2F\vert \alpha\vert^2+\overline{E}\,\overline{\alpha}^2)\\[15pt]
&=&-\dfrac{\vert \alpha\vert^2}{D}+F\vert T\vert^2,
\end{eqnarray*}
which proves equations \eqref{ae27}. 

Using the Gauss equation $K_i=K_e+\epsilon \nu^2$, the Weingarten equation $a K_i+b K_e=c$  \ becomes
\begin{equation*}
K_e=\dfrac{c-\epsilon a \nu^2}{a+b}.
\end{equation*}

The Codazzi equation is

\begin{equation}
\label{ae12} \nabla_X\mathcal{A}Y-  \nabla_Y\mathcal{A}X-\mathcal{A}\left[ X,Y\right]=\epsilon \nu (\langle Y,T\rangle X-\langle X,T\rangle Y), 
\end{equation}
where $\mathcal{A}$ is the shape operator of $\Sigma$ and $X, Y$  are tangent vector fields to $\Sigma$.  For $X=\partial_{\oz}, Y=\partial_z$ the Codazzi equation is,
\begin{equation*}
\nabla_{\partial_{\oz}}\mathcal{A}\partial_{z}-  \nabla_{\partial_{z}}\mathcal{A}\partial_{\oz}=\epsilon \nu (h_z \partial_{\oz}-h_{\oz} \partial_{z}), 
\end{equation*}
the scalar product of this equation with $\partial_{\oz}$ gives
\begin{equation*}
\langle\nabla_{\partial_{\oz}}\mathcal{A}\partial_{z}, \partial_{\oz}\rangle-  \langle \nabla_{\partial_{z}}\mathcal{A}\partial_{\oz},\partial_{\oz}\rangle=\epsilon \nu (h_z\overline{E} -h_{\oz}F)=-\epsilon\nu \alpha 
\end{equation*}
\[
\dfrac{\rho_{\oz}}{\rho}+(\Gamma_{12}^1-\Gamma_{22}^2)=-\dfrac{\epsilon \nu\alpha}{\rho},
\]
which is the equation \eqref{ae2}.

Taking the scalar product of the  compatibility equation $\nabla_X T=\nu \mathcal{A}X$ with $\partial_{\oz}$, for $X=\partial_z$ we get,
\begin{eqnarray*}
\langle \nabla_{\partial_z} T, \partial_{\oz}\rangle&=&\nu\langle \mathcal{A}\partial_z,\partial_{\oz}\rangle\\[15pt]
h_{z\oz}-\langle T,\nabla_{\partial_z}\partial_{\oz}\rangle&=& \nu\rho,
\end{eqnarray*}
then, we obtain equation \eqref{ae3}
\[
h_{z\oz}=\nu\rho+\Gamma_{12}^1h_z+\Gamma_{12}^2h_{\oz}.
\]
Similarly, taking the scalar product of the  compatibility equation $\nabla_X T=\nu \mathcal{A}X$ with $\partial_{z}$, for $X=\partial_z$ we get equation \eqref{ae4}.

From the compatibity equation $\mathrm{d}\nu(X)=-\langle \mathcal{A} X, T\rangle$, for $X=\partial_{\oz}$, we have
\[ 
\nu_{\oz}=-\langle \mathcal{A}\partial_{\oz}, \dfrac{\alpha \partial_z+\overline{\alpha}\partial_{\oz}}{D}\ \rangle=-\dfrac{\alpha K_e}{\rho}.
\]
\end{proof}

The equations on Lemma \ref{al1} enable us to rewrite $h_{z\oz}$ and $\nu_{z \oz}$, in a more suitable form, it is done in the following Proposition. 

\begin{proposition}\label{ap1} Let $\varphi:\Sigma \fl \me$ be an isometric immersion in $\me$. Assume $\Sigma$ is a  W-surface having positive extrinsic curvature and that $a+b$ is different from zero.  Let $N$ be the global unit normal vector field to $\Sigma$ such that the second fundamental form is positive definite and $z$ be a conformal parameter for the second fundamental form, then
\begin{eqnarray}
\label{ae8} h_{z\oz}&=&\dfrac{\nu\rho}{2 K_e(a+b)}(2 K_e(a+b)-\epsilon(2a+b)(1-\nu^2))\\[15pt]
\label{ae9} \nu_{z\oz}&=&-\dfrac{\epsilon a \nu \vert\alpha\vert^2}{(a+b)D}- F\nu K_e,
\end{eqnarray}
where $\alpha$ and $D$ are defined in \eqref{ae10} and \eqref{ae11}, respectively.
\end{proposition}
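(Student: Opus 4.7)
The plan is to derive both identities from the structural equations of Lemma~\ref{al1}, the crucial step being to pin down the Christoffel symbol $\Gamma_{12}^1$ explicitly by means of the Weingarten relation. Once this is done, \eqref{ae8} follows by a single substitution into \eqref{ae3}, and \eqref{ae9} by differentiating \eqref{ae13} and collapsing the result.

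First I would extract $\Gamma_{12}^1$. Differentiating \eqref{ae1} in $\oz$ and inserting \eqref{ae13} gives $K_{e,\oz}/K_e=2\epsilon a\nu\alpha/((a+b)\rho)$. On the other hand, $K_e=\rho^2/D$ combined with the trace identity $\Gamma_{12}^1+\Gamma_{22}^2=D_{\oz}/(2D)$ and the Codazzi equation \eqref{ae2} yields $K_{e,\oz}/K_e=-2\epsilon\nu\alpha/\rho-4\Gamma_{12}^1$. Equating these two expressions and conjugating,
\[\Gamma_{12}^1=-\frac{\epsilon\nu\alpha(2a+b)}{2\rho(a+b)},\qquad \Gamma_{12}^2=-\frac{\epsilon\nu\overline{\alpha}(2a+b)}{2\rho(a+b)}.\]
Plugging these into \eqref{ae3} and replacing $\alpha h_z+\overline{\alpha}h_{\oz}$ by $D(1-\nu^2)=\rho^2(1-\nu^2)/K_e$ via \eqref{ae5} produces \eqref{ae8} at once.

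For \eqref{ae9}, differentiating \eqref{ae13} in $z$ yields
\[\nu_{z\oz}=-K_e\left(\frac{\alpha}{\rho}\right)_z-\frac{\alpha K_{e,z}}{\rho}.\]
The conjugate of the formula for $K_{e,\oz}/K_e$ above gives $K_{e,z}/K_e=2\epsilon a\nu\overline{\alpha}/((a+b)\rho)$, so the second term equals $-2\epsilon a\nu|\alpha|^2/((a+b)D)$. For the first term I would expand $\alpha_z$ from the definition $\alpha=Fh_{\oz}-\overline{E}h_z$ together with \eqref{ae3}--\eqref{ae4}, clear every Christoffel symbol by using the explicit expressions for $\Gamma_{12}^1, \Gamma_{12}^2$ above and the conjugate of the Codazzi equation for $\rho_z$, and then invoke \eqref{ae5} and \eqref{ae27} to collapse $\alpha h_z+\overline{\alpha}h_{\oz}$ and $|h_z|^2$. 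The expected outcome is $-K_e(\alpha/\rho)_z=\epsilon a\nu|\alpha|^2/((a+b)D)-F\nu K_e$; adding this to the second term delivers \eqref{ae9}.

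The main obstacle is the bookkeeping in the second identity: Christoffel symbols other than $\Gamma_{12}^1, \Gamma_{12}^2$ enter through $\alpha_z$ and $\rho_z$, and one has to verify that every contribution not already of the form $|\alpha|^2/D$ reassembles cleanly into the single term $-F\nu K_e$. Treating $(\alpha/\rho)_z$ as a single atomic object, and invoking \eqref{ae5} and \eqref{ae27} only after each Christoffel symbol has been expressed via the Weingarten relation, appears to be the most economical route to avoiding spurious terms.
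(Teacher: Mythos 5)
Your proposal is correct and follows essentially the same route as the paper: you extract $\Gamma_{12}^1$ from the Weingarten relation via \eqref{ae13}, the Codazzi equation \eqref{ae2} and the trace identity \eqref{ae17}, substitute it into \eqref{ae3} together with \eqref{ae5} to get \eqref{ae8}, and obtain \eqref{ae9} by differentiating \eqref{ae13} with $\alpha_z$ computed from the definition \eqref{ae10} and the structure equations \eqref{ae3}--\eqref{ae4}. Your one cosmetic deviation---treating $(\alpha/\rho)_z$ as a single object, inside which the $\Gamma_{11}^1$-contributions from $\alpha_z$ and from the conjugated Codazzi expression for $\rho_z$ cancel automatically---is in fact slightly cleaner than the paper's auxiliary claim \eqref{ae20}, whose printed coefficient $\frac{D_z}{D}$ should read $\frac{D_z}{2D}$ (as the paper's own conjugation of \eqref{ae17} shows) for the final cancellation yielding \eqref{ae9} to go through.
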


\begin{proof}
We start proving equation \eqref{ae8}. Since $\Sigma$ is a W-surface, taking the derivative of equation \eqref{ae1} with respect to $\oz$  and using equation \eqref{ae13}, we obtain 
\begin{equation}
\label{ae14} \dfrac{(K_e)_{\oz}}{2K_e}=\dfrac{\epsilon\, a\, \alpha \, \nu}{(a+b)\rho}.
\end{equation}
On the other hand, $K_e=\dfrac{\rho^2}{D}$, then
\begin{equation}
\label{ae15} \dfrac{(K_e)_{\oz}}{2K_e}=\dfrac{\rho_{\oz}}{\rho}-\dfrac{D_{\oz}}{2D}.
\end{equation}
As a consequence of \eqref{ae14} and \eqref{ae15}, we have
\begin{equation}
\label{ae16}
\dfrac{\rho_{\oz}}{\rho}-\dfrac{D_{\oz}}{2D}=\dfrac{\epsilon\, a\, \alpha \, \nu}{(a+b)\rho}.
\end{equation}
A direct computation, see \cite[Lemma 8]{TM1}, 
\begin{equation}
\label{ae17}
\Gamma_{12}^1+\Gamma_{22}^2=\dfrac{D_{\oz}}{2 D}.
\end{equation}
The Codazzi equation \eqref{ae2} is equivalent to 
\begin{eqnarray}
\nonumber \hspace{1cm}\dfrac{\rho_{\oz}}{\rho}-(\Gamma_{22}^2+\Gamma_{12}^1)+2 \Gamma_{12}^1&=& -\dfrac{\epsilon\, \nu\,\alpha}{\rho}\\[15pt]
\nonumber  {\textnormal{by\ \ }\eqref{ae17},\atop{\Longrightarrow}}\hspace{2,5cm}  \dfrac{\rho_{\oz}}{\rho}-\dfrac{D_{\oz}}{2D}+2 \Gamma_{12}^1&=& -\dfrac{\epsilon\, \nu\,\alpha}{\rho} \\[15pt]
\label{ae18} {\textnormal{by\ \ }\eqref{ae16},\atop{\Longrightarrow}}\hspace{4,5cm}\ \Gamma_{12}^1&=&-\dfrac{\epsilon\, \alpha\, \nu\,(2a+b)}{2\rho\,(a+b)}.
\end{eqnarray}

Since $\Gamma_{12}^1=\overline{\Gamma_{12}^2}$, using equation \eqref{ae3}, we have
\begin{eqnarray}
\nonumber h_{z\oz}&=&-\dfrac{\epsilon\, \nu\,(2a+b)}{2\rho\,(a+b)}(\alpha h_{z}+\overline{\alpha}h_{\oz})+\nu\,\rho\\[15pt]
\nonumber { \textnormal{by\ \ }  \eqref{ae5},\atop{\Longrightarrow}}\hspace{1,5cm}h_{z\oz}&=& -\dfrac{\epsilon\, \nu\,(2a+b)}{2\rho\,(a+b)}(1-\nu^2) D+\nu\,\rho\\[15pt]
\nonumber&=& \nu\,\rho\left( 1-\dfrac{\epsilon\,(2a+b)(1-\nu^2)}{2\, K_e\, (a+b)}\right)\\[15pt]
\nonumber&=&\dfrac{\nu\rho}{2 K_e(a+b)}(2 K_e(a+b)-\epsilon(2a+b)(1-\nu^2)),
\end{eqnarray}
which prove equation \eqref{ae8}.

In order to prove equation \eqref{ae9}, observe that by equations \eqref{ae13}, \eqref{ae14} and \eqref{ae16}, we have
\begin{eqnarray}
\nonumber \nu_{z\oz}&=& -\alpha_{z} \dfrac{K_e}{\rho}  -\dfrac{2\epsilon\, a \, \nu\,\vert\alpha\vert^2 K_e}{\rho^2\,(a+b)}+
\dfrac{\alpha K_e}{\rho}\left(\dfrac{\epsilon\, a \,\overline{\alpha}\,\nu}{\rho\, (a+b)}+\dfrac{D_z}{2D}\right)\\[15pt]
\label{ae19} &=& -\alpha_{z} \dfrac{K_e}{\rho}  -\dfrac{\epsilon\, a \, \nu\,\vert\alpha\vert^2 }{D\,(a+b)}+
\dfrac{\alpha K_e}{\rho}\dfrac{D_z}{2D}.
\end{eqnarray}
We claim that  
\begin{equation}
\label{ae20}\alpha_z=\alpha\dfrac{D_{z}}{D}+ F\,\nu\,\rho.
\end{equation}
Let us assume this equation for a moment. Then, a direct computation using equations \eqref{ae19} and \eqref{ae20} gives the equation \eqref{ae9}, as desired. So, in order to finish the proof of the proposition, we need to prove that equation \eqref{ae20} holds.  Recall $\alpha=F h_{\oz}- \overline{E}h_z$. Then, using equations \eqref{ae3} and\eqref{ae4}, we obtain  
\begin{eqnarray}
\nonumber\alpha_z&=& \langle \nabla_{\partial_{z}}\partial_{z}, \partial_{\oz}\rangle h_{\oz}+ \langle \nabla_{\partial_{z}}\partial_{\oz}, \partial_{z}\rangle h_{\oz}- 2\langle \nabla_{\partial_{z}}\partial_{\oz}, \partial_{\oz}\rangle h_z+F h_{z\oz}- \overline{E} h_{zz}\\[15pt]
\nonumber&=&\Gamma_{11}^1(F\, h_{\oz}-\overline{E}\,h_z)+\Gamma_{12}^1(E\, h_{\oz}-F\,h_z)+2\Gamma_{12}^2(F\, h_{\oz}-\overline{E}\,h_z)+F\,\nu\,\rho,\\[15pt]
\nonumber&=&\Gamma_{11}^1\,\alpha-\Gamma_{12}^1\,\overline{\alpha} +2\Gamma_{12}^2\alpha+F\,\nu\,\rho,
\end{eqnarray}
a direct computation using equation \eqref{ae18} shows that $\Gamma_{12}^2\alpha- \Gamma_{12}^1\,\overline{\alpha}=0$. Moreover,  conjugating equation \eqref{ae17}, we obtain
\[
\alpha_z=\alpha(\Gamma_{11}^1+\Gamma_{12}^2)+F\,\nu\,\rho=\alpha\left(\dfrac{D_z}{D}\right) +F\,\nu\,\rho,
\]
as claimed.

\end{proof}

\subsection{A quadratic form on $\Sigma$}\label{quadratic} In this section, we will define a quadratic form $Qdz^2$ on $\Sigma$ having the property that $Q$ vanishes identically or its zeros are isolated with negative index. 

Let $\Sigma$ be a W-surface isometrically immersed in $\me$ having positive extrinsic curvature, assume that $a+b$ and $2a+b$ are different from zero.  For such a W-surface we introduce the quadratic forms
\begin{eqnarray}
\label{ae21}A&:=& I+ f(1-\nu^2) \, dh^2,\\[15pt]
\label{ae22}Q\, dz^2&:=& (E+ f(1-\nu^2)\, h_z^2) \,dz^2, 
\end{eqnarray}
 where $I$ is the first fundamental form of $\Sigma$ given in \eqref{ae6} and $f:\left[ 0,1\right] \fl \real$ is the real 
 analytic function given by
\begin{equation}
\label{ae23}f(t)=\dfrac{-\epsilon(2a+b)(c-\epsilon a)t-(c-\epsilon a)^2+(c-\epsilon a(1-t))^{\frac{2a+b}{a}}(c-\epsilon a)^{-\frac{b}{a}}}{\epsilon(a+b)(c-\epsilon a)\,t^2}.
\end{equation}

\begin{remark}  We point out that
\begin{enumerate}
\item The quadratic form  $Q\, dz^2$ is the $(2,0)$-part of $A$.  
\item The Taylor series near zero of $f$ is 
\[
f(t)=\displaystyle\sum_{n=0}^{n=\infty} a_n\, t^n,  
\]
where $a_n= \dfrac{\epsilon^{n+1}}{(a+b)(c-\epsilon\,a)^{(1+n)}\,(n+2)!}\, \displaystyle\prod_{j=0}^{n+1}(2a+b-j\,a)$. The convergence radius of this series is  $\dfrac{\vert c-\epsilon\, a\vert}{\vert a\vert}>0$. So, $f$ is real analytic on $[0,1]$.
\end{enumerate}
\end{remark}

 The extrinsic curvature of the pair $(II, A)$ is, see \cite{TM1}
\begin{eqnarray}\label{ae40}
K(II,A)&=&\dfrac{(F+f(1-\nu^2)\, \vert h_z\vert^2)^2- \vert E+f(1-\nu^2)\, h_z^2\vert^2}{\rho^2}\\[10pt]
\nonumber &=& \dfrac{F^2-\vert E\vert^2}{\rho^2}-\dfrac{f(1-\nu^2)\, ( \overline{E} h_z^2+-2F \vert h_z\vert^2+ Eh_{\oz}^2)}{\rho^2}\\[10pt] \nonumber
 { \textnormal{by\ \ }  \eqref{ae10},  \atop{\Longrightarrow}}\hspace{2,5cm}&=& \dfrac{1}{K_e}+\dfrac{f(1-\nu^2)\, D(\alpha h_z+\overline{\alpha}h_{\oz})}{\rho^2}\\[10pt]\nonumber
 { \textnormal{by\ \ }  \eqref{ae5},\atop{\Longrightarrow}}\hspace{2,5cm}&=&\dfrac{1}{K_e}(1+f(1-\nu^2)\, \vert T\vert^2),
\end{eqnarray}
in particular, once $\vert Q\vert^2=\vert E+f(1-\nu^2)\, h_z^2\vert^2$, using the first and fourth lines of \eqref{ae40}, we have
\begin{equation}
\label{ae030}\vert Q\vert^2= (F+f(1-\nu^2)\, \vert h_z\vert^2)^2 -D(1+f(1-\nu^2)\, \vert T\vert^2).
\end{equation}

The next result is the key lemma, which gives a estimate of $\vert Q_{\overline{z}}\vert$ in terms of  the function $\vert Q\vert$, more precisely

\begin{lemma}\label{al2}
Let $\varphi:\Sigma\fl \me$ be an isometric immersion in $\me$. We assume $\Sigma$ is a W-surface having positive extrinsic curvature. We also suppose that $ a+b$ and $2a+b$ are different from zero. Let $z$ be a conformal parameter for the second fundamental form. Then 
\begin{equation}
\label{ae24}
\vert Q_{\oz}\vert\leq \dfrac{2 \vert \nu\, \rho\, h_z^3\, f^\prime(1-\nu^2)\vert }{D} \vert Q\vert,
\end{equation}
where $Q$ and  $D$ are defined in \eqref{ae22} and \eqref{ae11}, respectively,  and $f^\prime(t)$ is the derivative of $f$ at $t$. 
\end{lemma}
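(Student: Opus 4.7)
The strategy is to compute $Q_{\oz}$ directly from the definition $Q=E+f(1-\nu^2)h_z^2$ and show that it factors as $\tfrac{\nu\rho f'(1-\nu^2)h_z}{D}$ times a remainder whose modulus is bounded by $2|h_z|^2|Q|$. Differentiating with respect to $\oz$ gives
\begin{equation*}
Q_{\oz} = E_{\oz} - 2 f'(1-\nu^2)\,\nu\,\nu_{\oz}\,h_z^2 + 2 f(1-\nu^2)\, h_z\, h_{z\oz}.
\end{equation*}
I would substitute $\nu_{\oz}=-\alpha K_e/\rho$ from \eqref{ae13} and $h_{z\oz}$ from \eqref{ae8}. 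For $E_{\oz}$ I would use the standard identity $E_{\oz}=2(\Gamma_{12}^1 E+\Gamma_{12}^2 F)$, the value $\Gamma_{12}^1=-\epsilon\alpha\nu(2a+b)/(2\rho(a+b))$ obtained as equation \eqref{ae18} in the proof of Proposition \ref{ap1}, and the algebraic identity $E\alpha+F\overline{\alpha}=Dh_z$ (from inverting the linear system defining $\alpha$). This yields $E_{\oz}=-\epsilon\nu(2a+b)Dh_z/(\rho(a+b))$.

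After these substitutions, the terms split into a multiple of $\nu h_z$ with no extra $h_z$-factor and the term $\tfrac{2\nu\alpha K_e f'(1-\nu^2)h_z^2}{\rho}$. The crucial observation is that the first group collapses to $-\nu\rho(1-\nu^2)f'(1-\nu^2)h_z$ \emph{precisely when} $f$ satisfies the first-order ODE
\begin{equation*}
(a+b)K_e\bigl((1-\nu^2)f'(1-\nu^2)+2f(1-\nu^2)\bigr)=\epsilon(2a+b)\bigl(1+(1-\nu^2)f(1-\nu^2)\bigr),
\end{equation*}
equivalently $vP'(t)=\epsilon(2a+b)(t+P(t))$ with $P(t)=t^2 f(t)$, $u=c-\epsilon a$ and $v=u+\epsilon a t$. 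Verifying that the explicit $f$ in \eqref{ae23} does satisfy this ODE is the technical heart of the argument: one rewrites \eqref{ae23} as $u^{(a+b)/a}[(a+b)P+(2a+b)t+\epsilon u]=\epsilon v^{(2a+b)/a}$ and differentiates once in $t$. Combining the two groups then produces the compact expression
\begin{equation*}
Q_{\oz} = \dfrac{\nu\rho f'(1-\nu^2)\,h_z}{D}\,\bigl(2\alpha h_z - D(1-\nu^2)\bigr).
\end{equation*}

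It remains to prove the pointwise inequality $|2\alpha h_z - D(1-\nu^2)|\le 2|h_z|^2|Q|$. Squaring, and using $\alpha h_z+\overline{\alpha}h_{\oz}=D(1-\nu^2)$ from \eqref{ae5} together with $|\alpha|^2=D(F(1-\nu^2)-|h_z|^2)$ from \eqref{ae27}, the left side becomes $-D^2(1-\nu^2)^2+4D(F(1-\nu^2)-|h_z|^2)|h_z|^2$. For the right side I would expand $|Q|^2=|E+f(1-\nu^2)h_z^2|^2$, use the identity $\overline{E}h_z^2+Eh_{\oz}^2=2F|h_z|^2-D(1-\nu^2)$ (a direct consequence of the definition of $\alpha$), and the relation $|E|^2+D=F^2$. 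After this, the difference should simplify to the perfect square
\begin{equation*}
4|h_z|^4|Q|^2-|2\alpha h_z-D(1-\nu^2)|^2 = \bigl(2|h_z|^2(F+f(1-\nu^2)|h_z|^2)-D(1-\nu^2)\bigr)^2\ge 0.
\end{equation*}

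The main obstacle is the ODE satisfied by $f$: it is the single non-obvious identity that makes the $\nu h_z$-group collapse, and it is precisely the ODE that dictates the somewhat mysterious explicit formula \eqref{ae23}. The final perfect-square identity is another pleasant cancellation, but once suggested by the desired bound it is routine to verify; everything else in the proof is substitution, differentiation, and standard algebraic manipulation.
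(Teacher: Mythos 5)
Your proposal is correct, and its core is the paper's own computation: the differentiation of $Q$, the formula $E_{\oz}=-\epsilon\nu(2a+b)Dh_z/(\rho(a+b))$, and the collapse of the terms not involving $f'$ are exactly the steps in the paper's proof. The identity you isolate as a first-order ODE, $(a+b)K_e\bigl(tf'(t)+2f(t)\bigr)=\epsilon(2a+b)\bigl(1+tf(t)\bigr)$ with $t=1-\nu^2$ and $(a+b)K_e=c-\epsilon a\nu^2=u+\epsilon a t$, is precisely the relation the paper dismisses as ``a direct computation''; your verification scheme is sound, since rewriting \eqref{ae23} as $\epsilon v^{(2a+b)/a}=u^{(a+b)/a}\bigl[(a+b)P+(2a+b)t+\epsilon u\bigr]$ with $P=t^2f$, differentiating, and eliminating $v^{(2a+b)/a}$ via the original relation gives $v(a+b)P'=(2a+b)\bigl(\epsilon S-v\bigr)=\epsilon(2a+b)(a+b)(P+t)$, i.e.\ your ODE. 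Both routes arrive at $Q_{\oz}=\frac{\nu\rho h_z f'(1-\nu^2)}{D}\bigl(2\alpha h_z-D(1-\nu^2)\bigr)$.

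You diverge only in the final bound, and your variant is valid. The paper substitutes $E=Q-f(1-\nu^2)h_z^2$ into $2\alpha h_z-D(1-\nu^2)=Eh_{\oz}^2-\overline{E}h_z^2$, the $f$-terms cancel, and the factorization $Qh_{\oz}^2-\overline{Q}h_z^2$ yields \eqref{ae24} by the triangle inequality in one line. You instead square: with $m=\vert h_z\vert^2$, equation \eqref{ae5} gives $\vert 2\alpha h_z-Dt\vert^2=4\vert\alpha\vert^2 m-D^2t^2$, and then \eqref{ae27} and \eqref{ae030} do produce your perfect square, $4m^2\vert Q\vert^2-\vert 2\alpha h_z-Dt\vert^2=\bigl(2m(F+f(t)m)-Dt\bigr)^2\ge 0$; I checked the algebra and it closes. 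In fact your identity is the parallelogram-law shadow of the paper's factorization: since $2\alpha h_z-Dt=Qh_{\oz}^2-\overline{Q}h_z^2$ and $Qh_{\oz}^2+\overline{Q}h_z^2=2m(F+f(t)m)-Dt$, your perfect square is exactly $\bigl(2\operatorname{Re}(Qh_{\oz}^2)\bigr)^2$. What the paper's substitution buys is brevity and a transparent explanation of why $\vert Q\vert$ itself appears on the right-hand side of \eqref{ae24}; what your squaring argument buys is independence from the cancellation trick, at the cost of invoking the extra identity \eqref{ae030}.
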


\begin{proof}
The derivative of the function $Q$ with respect to $\oz$ is
\begin{equation}
\label{ae25}
Q_{\oz}=E_{\oz}+2f(1-\nu^2)\, h_z h_{z\oz}-2\nu \nu_{\oz} f^\prime(1-\nu^2)\, h_z^2.
\end{equation}
Let us determine the expression of $E_{\oz}$. Observe that the Christoffel symbols with respect to the conformal parameter $z$ satisfies,  $\Gamma_{12}^1 =\overline{\Gamma_{12}^2}$.  Using equations \eqref{ae18} and \eqref{ae10}, we have
\begin{eqnarray*}
E_{\oz}&=& \partial_{\oz}\langle \partial_z,\partial_z\rangle= 2(\Gamma_{12}^1 E+\Gamma_{12}^2 F)\\[15pt]
&=&  -\dfrac{\epsilon\, \nu(2a+b)}{\rho(a+b)}(\alpha E+\overline{\alpha} F)\\[15pt]
&=&-\dfrac{\epsilon\, \nu(2a+b)}{\rho(a+b)} D\, h_z,
\end{eqnarray*}
then, since $K_e=\dfrac{\rho^2}{D}$, we obtain
\begin{equation}
\label{ae26} E_{\oz}=-\dfrac{\epsilon\, \nu(2a+b)\rho \, h_z}{K_e(a+b)}.
\end{equation}
By equations \eqref{ae8}, \eqref{ae13}, \eqref{ae25} and \eqref{ae26}, we have
\begin{eqnarray}
\nonumber Q_{\oz}&=&\nu \rho h_z\left( -\dfrac{\epsilon(2a+b)}{K_e(a+b)}+f(1-\nu^2)\left(\dfrac{2 K_e(a+b)-\epsilon(2a+b)(1-\nu^2)}{K_e(a+b)}\right)+f^\prime(1-\nu^2)\dfrac{2\alpha K_e h_z}{\rho^2}\right),
\end{eqnarray}
a direct computation shows that for $2a+b\neq 0$,  
\[
 -\dfrac{\epsilon(2a+b)}{K_e(a+b)}+f(1-\nu^2)\left(\dfrac{2 K_e(a+b)-\epsilon(2a+b)(1-\nu^2)}{K_e(a+b)}\right)=- (1-\nu^2) f^\prime(1-\nu^2).
\]
Using the above equation,  we obtain
\begin{eqnarray}
\nonumber Q_{\oz}&=&\nu\, \rho\, h_z\, f^\prime(1-\nu^2) \left( -(1-\nu^2)+  \dfrac{2\alpha\ h_z}{D}\right)\\[15pt]
\nonumber {\textnormal{by\  \ }\eqref{ae5},\atop{\Longrightarrow}}\hspace{1,6cm} &=& \nu\, \rho\, h_z\, f^\prime(1-\nu^2) \left(  \dfrac{\alpha\ h_z-\overline{\alpha} \, h_{\oz}}{D}\right)\\[15pt]
\nonumber {\textnormal{by\  \ }\eqref{ae10},\atop{\Longrightarrow}}\hspace{1,4cm} &=& \dfrac{\nu\, \rho\, h_z\, f^\prime(1-\nu^2)}{D} \left( E h_{\oz}^2 -\overline{E} h_z^2\right)\\[15pt]
\nonumber {\textnormal{by\  \ }\eqref{ae22},\atop{\Longrightarrow}}\hspace{1,4cm} &=& \dfrac{\nu\, \rho\, h_z\, f^\prime(1-\nu^2)}{D} \left( (Q-f(1-\nu^2)\, h_z^2)\, h_{\oz}^2 -(\overline{Q}-f(1-\nu^2)\,  h_{\oz}^2) \, h_z^2\, \right)\\[15pt]
\nonumber  &=& \dfrac{\nu\, \rho\, h_z\, f^\prime(1-\nu^2)}{D} \left( Q h_{\oz}^2 -\overline{Q} h_z^2\right),
\end{eqnarray}
then 
\[
\vert Q_{\oz}\vert\leq \dfrac{2 \vert \nu\, \rho\, h_z^3\, f^\prime(1-\nu^2)\vert }{D} \vert Q\vert,
\]
as desired. 

\end{proof}

Lemma \ref{al2} is used to apply \cite[Lemma 2.7.1]{Jost} and obtain an important property of the  function $Q$. 

\begin{proposition}\label{ap2}
Let $\varphi:\Sigma\fl \me$ be an isometric immersion in $\me$. Assume $\Sigma$ is a  W-surface having positive extrinsic curvature. Moreover, we suppose  $ a+b$ and $2a+b$ are different from zero.  Consider $\Sigma$ as a Riemann surface with the conformal structure induced by its second fundamental form. Then, the quadratic form $Qdz^2$, where $Q:\Sigma\fl\mathbb{C}$ is defined in \eqref{ae22}, vanishes identically or its zeros are isolated with negative index.      
\end{proposition}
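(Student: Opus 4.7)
The strategy is to apply a well-known Bers-type similarity principle for complex functions satisfying a pointwise Beltrami-type inequality, which the authors cite as \cite[Lemma 2.7.1]{Jost}. That lemma asserts that if $Q$ is a complex-valued function on a domain in $\C$ satisfying $\vert Q_{\oz}\vert \leq g\,\vert Q\vert$ for some locally bounded function $g$, then either $Q\equiv 0$ or the zeros of $Q$ are isolated and at each such zero $Q$ admits a local factorization of the form $Q(z)=z^n\phi(z)$, with $\phi$ continuous and non-vanishing. The bulk of the proof of Proposition \ref{ap2} is therefore to verify the hypothesis and then to translate the conclusion into the geometric statement about the quadratic differential $Q\,dz^2$.

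The pointwise inequality is exactly the content of Lemma \ref{al2}, so what must be checked is that the coefficient
\[
g = \dfrac{2\,\vert\nu\,\rho\,h_z^3\,f^\prime(1-\nu^2)\vert}{D}
\]
is locally bounded on $\Sigma$, where $\Sigma$ is endowed with the conformal structure of its second fundamental form. I would argue this directly: the angle function satisfies $\vert\nu\vert\leq 1$; the coefficients $\rho$ and $h_z$ are smooth in the conformal parameter $z$, hence locally bounded; the function $f$ is real analytic on $[0,1]$ by the convergence of the Taylor series noted after \eqref{ae23}, so $f^\prime(1-\nu^2)$ is bounded; and $D=F^2-\vert E\vert^2$ is the discriminant of the first fundamental form, which is continuous and strictly positive on $\Sigma$, and therefore bounded away from $0$ on any compact subset. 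Together these facts give the required local boundedness of $g$.

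With the hypothesis in hand, the similarity principle yields the dichotomy: either $Q\equiv 0$ in every local conformal chart (and hence globally by the connectedness of $\Sigma$), or the zeros of $Q$ are isolated and locally of the form $Q(z)=z^n\phi(z)$ with $\phi(0)\neq 0$ and $n\geq 1$. In the latter case, the order-$n$ zero of $Q$ corresponds to an isolated zero of the tensor $Q\,dz^2$ whose associated line field has index $-n/2<0$, which is the negative-index statement of the proposition.

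The only potentially delicate point is the local boundedness of $g$, and in particular of the factor $1/D$; but, as noted above, this is immediate from the fact that the first fundamental form is a (positive definite) Riemannian metric, so no genuine difficulty arises. The entire argument thus reduces to invoking the similarity principle, once Lemmas \ref{al1} and \ref{al2} and the analyticity of $f$ have been established.
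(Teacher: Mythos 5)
Your proposal is correct and follows exactly the route the paper takes: the paper's entire argument for Proposition \ref{ap2} is to combine the estimate $\vert Q_{\oz}\vert\leq g\,\vert Q\vert$ from Lemma \ref{al2} with the similarity principle of \cite[Lemma 2.7.1]{Jost}, concluding that $Q$ vanishes identically or has isolated zeros of positive order $n$, hence index $-n/2<0$ for the associated field of directions. Your added verification that the coefficient $g$ is locally bounded (using $\vert\nu\vert\le 1$, the smoothness of $\rho$ and $h_z$, the analyticity of $f$ on $[0,1]$, and $D>0$) is exactly the implicit content of the paper's one-line proof, so nothing is missing.
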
  

A direct consequence of Proposition \ref{ap2} is 

\begin{proposition}\label{ap3}
Let $\varphi:\Sigma\fl \me$ be an isometric immersion in $\me$. Assume $\Sigma$ is a  W-surface having positive extrinsic curvature. Moreover, we suppose  $ a+b$ and $2a+b$ are different from zero.  Consider $\Sigma$ as a Riemann surface with the conformal structure induced by its second fundamental form. If $\Sigma$ is a topological sphere, then the function $Q$ is identically null on $\Sigma$.   
\end{proposition}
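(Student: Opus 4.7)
The plan is a very short argument by contradiction, exploiting the topology of the sphere against the sign of the indices given by Proposition \ref{ap2}.

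Suppose, for contradiction, that $Q$ does not vanish identically on $\Sigma$. By Proposition \ref{ap2}, the zero set of $Q$ then consists of isolated points, each with \emph{negative} index. Since $\Sigma$ is assumed to be a topological sphere, hence compact, this zero set is finite: write the zeros as $p_1,\dots,p_N$ with indices $\mathrm{ind}_{p_j}(Q\,dz^2) < 0$.

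Next, I would interpret the quadratic differential geometrically. On $\Sigma\setminus\{p_1,\dots,p_N\}$, the condition $Q\,dz^2 \in \mathbb{R}_{>0}$ determines a well defined (unoriented) line field, and the $p_j$ are precisely the singularities of this line field. The Poincaré--Hopf theorem for line fields on a closed surface gives
\begin{equation*}
\sum_{j=1}^{N} \mathrm{ind}_{p_j}(Q\,dz^2) \;=\; \chi(\Sigma) \;=\; \chi(\s^2) \;=\; 2.
\end{equation*}
But by Proposition \ref{ap2} every term on the left-hand side is strictly negative, so the sum is strictly less than zero, contradicting the value $2$ on the right. This contradiction forces $Q \equiv 0$ on $\Sigma$.

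There is essentially no calculation to do here, since all the analytic work has already been packaged into Lemma \ref{al2} and Proposition \ref{ap2}. The only subtlety worth mentioning (and the part I would expect a careful reader to check) is that the conformal structure in play is the one induced by the second fundamental form, which is well defined because $II$ is positive definite whenever $K_e>0$; with respect to this structure $\Sigma$ is a closed Riemann surface of genus zero, and the Poincaré--Hopf count above applies to the line field determined by $Q\,dz^2$ in exactly the same way as for a holomorphic quadratic differential, thanks to the local description of the zeros provided by Proposition \ref{ap2}.
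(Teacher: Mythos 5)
Your argument is correct and is exactly the reasoning the paper intends: the paper states this proposition without proof, as ``a direct consequence of Proposition \ref{ap2}'', and the implicit mechanism is precisely your Poincar\'e--Hopf count for the line field of $Q\,dz^2$, where negative indices at the zeros are incompatible with $\chi(\s^2)=2$. Your closing remark about the conformal structure coming from the positive definite second fundamental form is the right point to check, and it is handled the same way in the paper.
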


\subsection{Vertical height estimates} This section is devoted to give a vertical height estimates for some W-surfaces, more precisely

\begin{theorem}[Vertical height estimates] \label{avhe}
Let $\varphi:\Sigma\fl \mathbb{H}^2\times\real$ be a compact graph on a domain $\Omega\subset \mathbb{H}^2$ whose boundary is contained in the slice $\h^2\times\{0\}$.  Assume $\Sigma$ is a  W-surface having positive extrinsic curvature. Moreover, suppose $2a+b$ is different from zero, $a+b>0$ and $c>0$. Then there exists a constant $C_0$ which depends only on $K_e, a, b, c$, such that the height function $h$, satisfies  $h(p)\leq C_0$ for all $p$ in $\Sigma$.  
\end{theorem}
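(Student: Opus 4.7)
The plan is to apply the maximum principle to a test function $\phi = h + G(\nu)$ on $\Sigma$, where $G\colon [0,1]\to\R$ is to be determined from the Weingarten data $(a,b,c)$. Orient $\Sigma$ so that the second fundamental form is positive definite; since $\Sigma$ is a graph with boundary in $\h^2\times\{0\}$, the angle function satisfies $\nu>0$ on the interior and reaches $\nu=1$ at the interior maximum of $h$. Combining the Weingarten relation \eqref{DE1} with the Gauss equation \eqref{G1} for $\epsilon=-1$, the extrinsic curvature becomes a function of $\nu$ alone:
\[
K_e=\frac{c+a\nu^{2}}{a+b}.
\]

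In a conformal parameter $z$ for the second fundamental form, I would compute
\[
\phi_{z\oz}=h_{z\oz}+G'(\nu)\,\nu_{z\oz}+G''(\nu)\,|\nu_z|^{2},
\]
using equations \eqref{ae8} and \eqref{ae9} from Proposition \ref{ap1} together with the identity $|\nu_z|^{2}=K_e\,|\alpha|^{2}/D$ derived from \eqref{ae13}. Using also the identity $F=HD/\rho$, where $H$ is the mean curvature, the expression decomposes into three contributions: a term proportional to $\nu\rho$ depending only on $\nu$, a term proportional to $H\nu\rho$ coming from $F\nu K_e$ in $\nu_{z\oz}$ (multiplied by $G'$), and a term proportional to $|\alpha|^{2}/D$ whose coefficient involves $G'$ and $G''$.

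Next, I choose $G$ so that the $|\alpha|^{2}/D$ coefficient vanishes, which yields the first-order ODE $G''(\nu)(c+a\nu^{2})+G'(\nu)\,a\nu=0$, solvable as $G'(\nu)=-C/\sqrt{c+a\nu^{2}}$ for a suitable constant $C>0$. With this choice, the remaining part of $\phi_{z\oz}$ is $\nu\rho$ times an expression involving $H$ only through the positive term $CH/\sqrt{c+a\nu^{2}}$. Using the pointwise bound $H\ge\sqrt{K_e}$, valid since $K_e=k_1k_2>0$ with $k_1,k_2>0$, and taking $C$ large enough in terms of $(a,b,c)$, I would show $\phi_{z\oz}\ge 0$ on $\Sigma$. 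By the strong maximum principle, $\phi$ attains its maximum on $\partial\Sigma$, where $h\equiv 0$, so $\phi\le\max_{\nu\in[0,1]}G$. The bound $h(p)\le\max G-\min G=:C_0$ follows, with $C_0$ depending only on $(a,b,c)$ and a lower bound for $K_e$ on $\Sigma$.

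The main obstacle is the sign analysis yielding $\phi_{z\oz}\ge 0$. After imposing the cancellation ODE, the $\nu$-only piece reads
\[
\frac{2c+2a+b(1-\nu^{2})}{2(c+a\nu^{2})},
\]
which may have indefinite sign when $2a+b<0$; the positivity must then be recovered via the $H$-contribution $CH/\sqrt{c+a\nu^{2}}$ together with $H\ge\sqrt{K_e}$ and the hypotheses $a+b>0$, $c>0$. The rotational W-sphere of Proposition \ref{p1}(1) serves as the model case, on which $\phi$ is constant, and should pin down both the form of $G$ and the correct value of $C$.
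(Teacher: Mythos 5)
Your overall scheme coincides with the paper's (a test function $\phi=h+g(\nu)$ that is $\Delta^{II}$-subharmonic with controlled boundary values), but your way of closing the estimate is genuinely different and in fact simpler. The paper groups $\phi_{z\oz}$ by $F$ and $\vert h_z\vert^2$ (via \eqref{ae27}) and chooses $g$ so that the two coefficients become proportional with ratio $f(1-\nu^2)$; this forces it to bring in the quadratic-form machinery: the definiteness of $A$ to show $F+f(1-\nu^2)\vert h_z\vert^2>0$, and the identity \eqref{ae030} to bound that quantity below by $\rho\sqrt{(1+(1-\nu^2)f(1-\nu^2))/K_e}$. You instead keep the grouping by $\vert\alpha\vert^2/D$ and $F$ and annihilate the first coefficient outright; this is correct: by \eqref{ae9} and $\vert\nu_z\vert^2=K_e\vert\alpha\vert^2/D$ the cancellation ODE is $(c+a\nu^2)g''+a\nu g'=0$, solved by $g'=C(c+a\nu^2)^{-1/2}$, and with $FK_e=H\rho$ one is left with $\phi_{z\oz}=\nu\rho\bigl(\tfrac{2(c+a)+b(1-\nu^2)}{2(c+a\nu^2)}-Hg'(\nu)\bigr)$. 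Then $H\ge\sqrt{K_e}$ (both principal curvatures are positive) together with the Weingarten relation $K_e=(c+a\nu^2)/(a+b)$ gives the exact constant $\sqrt{K_e}\,g'=C/\sqrt{a+b}$, so a choice of $C$ depending only on $a,b,c$ closes the argument, with $C_0=C\int_{-1}^{0}(c+at^2)^{-1/2}\,dt$. Your $g$ is a different function from the paper's, and your route avoids $f$, $Q$ and \eqref{ae030} entirely, at the harmless cost of the mean-curvature bound.

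However, your orientation bookkeeping is wrong as written, and with your literal signs the argument proves a false statement. For a compact graph above the slice with $II$ positive definite, the angle function satisfies $\nu\in[-1,0]$, with $\nu=-1$ at the highest point (the normal making $II>0$ points downward there); you cannot have $\nu>0$, $\rho>0$ and $H>0$ simultaneously in this geometry. Under your stated conventions the bracket $\tfrac{2(c+a)+b(1-\nu^2)}{2(c+a\nu^2)}+\tfrac{CH}{\sqrt{c+a\nu^2}}$ is positive for \emph{every} $C\ge0$, since its numerator is at least $2(c+a)>0$ (note $c+a=(a+b)K_e>0$ at an interior top point where $\nu^2=1$; this positivity is also implicitly needed for the paper's $g'$ to be real), so taking $C=0$ would "show" that $h$ itself is subharmonic and hence $h\le0$ -- refuted by half of the rotational sphere. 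The repair is routine: with $\nu\le0$, take $g'=+C(c+a\nu^2)^{-1/2}\ge0$ and demand the bracket be $\le0$, i.e. $C\ge\sqrt{a+b}\,\max_{t\in[-1,0]}\tfrac{2(c+a)+b(1-t^2)}{2(c+at^2)}$; then $\phi_{z\oz}\ge0$, $\phi\le0$ on $\partial\Sigma$ and everywhere by the maximum principle, and $h\le\int_{-1}^{0}g'$. Two further corrections of emphasis: your claim that the $H$-contribution is needed only when $2a+b<0$ is inverted -- with the correct signs it is needed for all values of $2a+b$ (when the $\nu$-only piece happens to be nonpositive you simply need less of it, the analogue of the paper's case $M=1$); and your final hedge that $C_0$ depends on a lower bound of $K_e$ is unnecessary, since $K_e$ is already a function of $\nu$ through $a,b,c$. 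The heuristic that $\phi$ is constant on the rotational sphere is neither needed nor what fixes $C$.
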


\begin{proof}
The idea of this proof is to cook up a sub-harmonic function $\phi=h+g(\nu)$ on $\Sigma$ having non-positive boundary values, where $g:[-1,0]\fl\real$ is to be determined. In order to compute $\phi_{z\oz}$,  we calculated $(g(\nu))_{z\oz}$. Taking into account equations  \eqref{ae1}, \eqref{ae9}, \eqref{ae13} and \eqref{ae27}, we obtain
\begin{eqnarray}
\nonumber & (g(\nu))_{z\oz}&=(\nu_z g^\prime(\nu))_{\oz}=\nu_{z\oz}\, g^\prime(\nu)+\vert\nu_z\vert^2\, g^{\prime\prime}(\nu)\\[10pt]
\nonumber&=& \dfrac{\vert\alpha\vert^2}{D}\left(\dfrac{a\nu}{(a+b)}g^\prime(\nu)+K_e\,g^{\prime\prime}(\nu)\right) - F\, \nu\, K_e\, g^\prime(\nu)\\[15pt]
\nonumber&=& F\left( \left(\dfrac{ a\nu\,(1-\nu^2)}{(a+b)}-\nu K_e\right)\,g^\prime(\nu) + K_e(1-\nu^2)g^{\prime\prime}(\nu)\,\right)+ \vert h_z\vert^2 \left( -\dfrac{a\nu}{(a+b)}g^\prime(\nu)- K_e\,g^{\prime\prime}(\nu)\right)\\[15pt]
\nonumber&=& K_e\left( F\left( \left(\dfrac{  a \nu(1-\nu^2)}{(c+ a \nu^2)}-\nu\right)g^\prime(\nu) +(1-\nu^2)g^{\prime\prime}(\nu)\right)+ \vert h_z\vert^2 \left(  -\dfrac{ a\nu}{(c+ a \nu^2)}g^\prime(\nu)- g^{\prime\prime}(\nu)\right)\right).
\end{eqnarray}

Let $g:[-1,0]\fl\real$ be a real function whose derivative is  given  by 
\[g^{\prime}(t)= M\sqrt{\dfrac{(\frac{c+a}{c+a t^2})^{\frac{a+b}{a}}-1}{ (1-t^2)(c+a\,t^2)}},\]
 where $M$ is a constant depending only on $a,b,c$  defined by
\begin{equation}
M=
\left\{\begin{array}{cll}
\dfrac{\displaystyle\max_{\nu\in[-1,0]} \left(1+\dfrac{(2a +b)(1-\nu^2)}{2(c+ a\ \nu^2)}\right)}{\displaystyle\min_{\nu\in[-1,0]}\left(\sqrt{\dfrac{1}{c+ a \, \nu^2}\left(\dfrac{c+ a}{c+a\,\nu^2}\right)^{\frac{a+b}{a}}}\right)},&\textnormal{if} & \displaystyle\max_{\nu\in[-1,0]} \left(1+\dfrac{(2a +b)(1-\nu^2)}{2(c+ a\ \nu^2)}\right)>0;\\[15pt]
1&\textnormal{if} & \displaystyle\max_{\nu\in[-1,0]} \left(1+\dfrac{(2a +b)(1-\nu^2)}{2(c+a\ \nu^2)}\right)\leq0,
\end{array}\right.
\end{equation}
here $\displaystyle\max_{s\in[s_0,s_1]}(u(s))$ and $\displaystyle\min_{s\in[s_0,s_1]}(u(s))$ are the maximum and minimum  of the function $u(s)$ for $s$ in $[s_0,s_1]$.

A direct computation shows that 
\begin{equation}
\label{ae28}
 \left(\dfrac{ a \nu(1-\nu^2)}{(c+ a \nu^2)}-\nu\right)g^\prime(\nu) +(1-\nu^2)g^{\prime\prime}(\nu)=-\dfrac{\nu g^\prime(\nu)}{1+(1-\nu^2)\,  f(1-\nu^2)},
\end{equation}
where the real function $f(t)$ is defined in \eqref{ae23}. Also, we have 
\begin{equation}
\label{ae29}
-\dfrac{  a\nu}{(c+ a \nu^2)}g^\prime(\nu)- g^{\prime\prime}(\nu)= f(1-\nu^2) \left( -\dfrac{\nu g^\prime(\nu)}{1+(1-\nu^2)\,  f(1-\nu^2)}\right).
\end{equation}

Then, by \eqref{ae28} and \eqref{ae29}, 
\begin{equation}\label{ae30}
g_{z\oz}= - \dfrac{\nu\, K_e\, \, g^\prime(\nu)}{1+(1-\nu^2)\, f(1-\nu^2)}(F+\vert h_z\vert^2 \, f(1-\nu^2)).
\end{equation}

Observe that $(F+\vert h_z\vert^2 \, f(1-\nu^2))$ is positive on $\Sigma$, in fact,  since the extrinsic curvature $K(II,A)$ on equation \eqref{ae40} of the pair $(II,A)$ is positive, the quadratic form $A$ is positive definite or negative definite which implies that either $(F+\vert h_z\vert^2 \, f(1-\nu^2))$ is positive in $\Sigma$ or it is negative everywhere. At the highest point $h_z=0$ and we have  $(F+\vert h_z\vert^2 \, f(1-\nu^2))= F$ is positive, so we conclude that $(F+\vert h_z\vert^2 \, f(1-\nu^2))$ is positive in $\Sigma$.  

In order to compute $\phi_{z\oz}$ it is worth to write $g^\prime(t)$ as 
\[
g^\prime(t)=M\sqrt{\dfrac{1 +  (1 - t^2)\ f(1-t^2)}{ Ke\,((c+a\,t^2)-(a+b)(1-t^2)(1+(1-t^2)\,\,f(1-t^2)))}}\, ,
\]
 keeping this in mind, using  equations \eqref{ae8}, \eqref{ae30} and \eqref{ae030}, we obtain
\begin{eqnarray}
\nonumber\left(\phi\right)_{z\oz}
&=&\nu\left( \rho+\dfrac{ \rho(2a +b)(1-\nu^2)}{2 K_e(a+b)} -\dfrac{K_e\,  g^\prime(\nu)}{1+(1-\nu^2)\, f(1-\nu^2)}(F+\vert h_z\vert^2 \, f(1-\nu^2))\right)\\[15pt]
\nonumber&=&\nu\left( \rho+\dfrac{ \rho(2a +b)(1-\nu^2)}{2 K_e(a+b)} -\dfrac{ K_e\, g^\prime(\nu)}{1+(1-\nu^2)\, f(1-\nu^2)}  \sqrt{\vert Q\vert^2+\dfrac{\rho^2(1+(1-\nu^2)\, \, f(1-\nu^2))}{K_e}} \right)\\[15pt]
\nonumber&\geq & \nu\,\rho\left( 1+\dfrac{(2a +b)(1-\nu^2)}{2 K_e(a+b)} -\dfrac{ K_e\, g^\prime(\nu)}{1+(1-\nu^2)\, f(1-\nu^2)}  \sqrt{\dfrac{1+(1-\nu^2)\, \, f(1-\nu^2)}{K_e}} \right)\\[15pt]
\nonumber &=& \nu\,\rho\left( 1+\dfrac{(2a +b)(1-\nu^2)}{2 K_e(a+b)} - M\,\sqrt{\dfrac{1}{(c+a\,\nu^2)- (a+b)(1-\nu^2)(1+(1-\nu^2)\, \, f(1-\nu^2))}}\,\, \right)\\[15pt]
\nonumber &=&  \nu\,\rho\left( 1+\dfrac{(2a +b)(1-\nu^2)}{2(c-\epsilon\, a\ \nu^2)} - M\,\sqrt{\dfrac{1}{c+ a \, \nu^2}\left(\dfrac{c+ a}{c+ a\,\nu^2}\right)^{\frac{a+b}{a}}}\,\, \right),
\end{eqnarray}
so, the definition of $M$ implies that $\varphi_{z\oz}\geq 0$.  
 Taking 
 \[
g(\nu)=\displaystyle\int_{0}^\nu g^\prime(t)dt,  
 \]
we have $\Delta^{II}(h+g(\nu))=\dfrac{2}{\rho}(h+g(\nu))_{z\oz}\geq0$ in $\Sigma$, where $\Delta^{II}$ is the laplacian with respect to the second fundamental form. Moreover, $h+g(\nu)$ is non-positive on the boundary of $\Sigma$,  once $g^\prime(\nu)$ is non-negative and $\nu\le0$, then we have that $h+g(\nu)$ is non-positive everywhere. In particular, the maximum of the function $h $ is  
\[
C_0=\displaystyle\int_{-1}^0 g^\prime(t)dt.
\]
\end{proof}

\subsection{Horizontal heigh estimates}

In this section we will see that the horizontal height for  a class of compact embedded W-surfaces in $\h^2\times\R$ with boundary on a vertical plane is bounded.

Let $\varphi:\Sigma\ \to M^2(\epsilon)\times\R$ be an isometric immersion from a oriented surface $\Sigma$. Recall $\Sigma$ is a W-surface if the Weingarten function 
$$K_e-\dfrac{c-a\epsilon\nu^2}{a+b}$$
vanishes identically. Observe that we may regard $K_e-\dfrac{c-a\epsilon\nu^2}{a+b}=0$ as a second order partial differential equation. From this point of view, once $\nu$ depends only on the first derivative of the immersion, it can be showed that the partial differential equation $K_e-\dfrac{c-a\epsilon\nu^2}{a+b}=0$ is absolutely elliptic if $K_e>0$. So, if $\Sigma$ is a W-surface having positive extrinsic curvature the interior and the boundary maximum principle, in the sense of Hopf, hold.

Let $\varphi_j:\Sigma_j\ \to M^2(\epsilon)\times\R$, $j=1,2$, be two isometric immersions. Assume $\Sigma_j$ is a W-surface having positive extrinsic curvature. Let $N_j$ be the global unit normal vector field to $\Sigma_j$ such that the second fundamental form is positive definite. Let $p\in\Sigma_1\cap\Sigma_2$ and assume that $N_1(p)=N_2(p)$. Once $N_1(p)=N_2(p)$, there is a neighbourhood $U_j\subset\Sigma_j$ of $p$ such that $U_j$ is a graph in exponential coordinates of a function $f_j$ defined on a neighbourhood $\mathcal{D}$ of the origin of $T_p\Sigma_1=T_p\Sigma_2$ ($T_p\Sigma_j$ is the tangent plane of $\Sigma_j$ at $p$). Since the extinsic curvature of $\Sigma_j$ is positive, $f_j$ is a positive function (for $\mathcal{D}$ small enough). We say that $\Sigma_1$ is above $\Sigma_2$, which we denote by $\Sigma_1\ge \Sigma_2$, in a neighborhood of $p$ if $f_1\ge f_2$ in $\mathcal{D}$.

Under this notation, we have the following important theorem.   
\begin{theorem}[Hopf Maximum Principle, \cite{Hopf}] \label{t1} Let $\varphi_j:\Sigma_j\ \to M^2(\epsilon)\times\R$, $j=1,2$, be two isometric immersions. Assume $\Sigma_j$ is a W-surface having positive extrinsic curvature. Let $N_j$ be the global unit normal vector field to $\Sigma_j$ such that the second fundamental form associated to $\Sigma_j$ is positive definite.

Suppose that,
\begin{itemize}
\item[i)] $\Sigma_1$ and $\Sigma_2$ are tangent at an interior point $p\in\Sigma_1\cap\Sigma_2$, or
\item[ii)] there exists $p\in\partial\Sigma_1\cap\partial\Sigma_2$ such that both $T_p\Sigma_1=T_p\Sigma_2$ and $T_p\partial\Sigma_1=T_p\partial\Sigma_2$,
\end{itemize}
furthermore, suppose that the unit normal vector fields of $\Sigma_1$ and $\Sigma_2$ coincide at $p$. If $\Sigma_1\ge\Sigma_2$ in a neighbourhood $U_j\subset\Sigma_j$ of $p$, then $\Sigma_1=\Sigma_2$ in $U_1=U_2$.
\end{theorem}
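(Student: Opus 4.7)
The plan is the standard reduction of a geometric maximum principle to the classical Hopf maximum principle for linear elliptic PDE. First I would work in exponential coordinates centered at~$p$ on the common tangent plane $T_p\Sigma_1=T_p\Sigma_2$, with the common normal $N_1(p)=N_2(p)$ as the vertical axis. Because $K_e>0$ on each $\Sigma_j$, each $\Sigma_j$ is locally strictly convex at $p$, so on a small disk $\mathcal{D}$ around the origin each $\Sigma_j$ is a graph $z=f_j(x,y)$ with $f_j(0)=0$ and $\nabla f_j(0)=0$. By hypothesis $\Sigma_1\ge\Sigma_2$, which translates into $u:=f_1-f_2\ge 0$ on $\mathcal{D}$, and $u(0)=0$, $\nabla u(0)=0$.

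Next I would translate the Weingarten condition into a PDE on~$f_j$. The extrinsic curvature, the angle function and the induced metric can all be written as smooth functions of $(f_j,\nabla f_j,D^2 f_j)$, so the equation $(a+b)K_e+\epsilon a\nu^2=c$ becomes
\[
\mathcal{F}(D^2 f_j,\nabla f_j,f_j)=0,
\]
for a smooth operator~$\mathcal{F}$ which, up to the smooth positive factor coming from the metric denominator, is of Monge--Amp\`ere type
\[
\mathcal{F}=(a+b)\,\frac{\det D^2 f_j}{(1+|\nabla f_j|^2)^{2}\,g}\;+\;\frac{\epsilon\,a}{1+|\nabla f_j|^2\cdot(\cdot)}\;-\;c,
\]
where the exact form of the denominators uses the metric of $M^2(\epsilon)\times\R$ in the chosen chart. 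What matters is only that along any solution with $K_e>0$, the linearization of $\mathcal{F}$ with respect to the Hessian slot is the cofactor matrix of $D^2f_j$, which is positive definite whenever $D^2 f_j$ is. Hence the equation is (uniformly on small compact subsets of $\mathcal{D}$) elliptic at each solution.

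Now I would compare the two solutions by the standard trick: write
\[
0=\mathcal{F}(D^2 f_1,\nabla f_1,f_1)-\mathcal{F}(D^2 f_2,\nabla f_2,f_2)
=\int_0^1\frac{d}{ds}\,\mathcal{F}\bigl(D^2 f_s,\nabla f_s,f_s\bigr)\,ds,
\]
with $f_s:=s f_1+(1-s)f_2$. The right-hand side equals $Lu$, where
\[
Lu:=a^{ij}(x,y)\,u_{ij}+b^{i}(x,y)\,u_{i}+c(x,y)\,u,
\]
and $a^{ij}=\int_0^1\partial\mathcal{F}/\partial f_{ij}\,ds$ is positive definite by the ellipticity observation above (since $D^2 f_s$ is a convex combination of two positive definite matrices, hence positive definite, on a small enough disk). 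Thus $Lu=0$ with $u\ge 0$, $u(0)=0$, and $L$ uniformly elliptic with bounded coefficients on~$\mathcal{D}$.

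In case~(i), the origin is an interior minimum of $u$, so the classical strong maximum principle (applied to $-u$ and absorbing the zero-order coefficient in the standard way, which is possible since $u(0)=0$) forces $u\equiv 0$ on a neighborhood of the origin, i.e.\ $\Sigma_1=\Sigma_2$ in $U_1=U_2$. In case~(ii), the additional hypothesis $T_p\partial\Sigma_1=T_p\partial\Sigma_2$ means that in the chart the two boundary curves share the same tangent at the origin, hence $u$ attains its minimum~$0$ at the boundary point~$0$ of a domain with smooth boundary curve, and $\nabla u(0)=0$ both tangentially (because the boundaries agree to first order at~$0$) and normally (since $\nabla f_j(0)=0$). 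An application of the Hopf boundary point lemma then rules out $u>0$ in the interior unless the outward normal derivative is strictly positive; since that derivative vanishes, one again concludes $u\equiv 0$ near~$0$, i.e.\ $\Sigma_1=\Sigma_2$ in $U_1=U_2$.

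I expect the only nontrivial point to be the verification of ellipticity of $\mathcal{F}$, namely that along a solution with $K_e>0$ the matrix $\partial\mathcal{F}/\partial f_{ij}$ is positive definite, and that this persists under the convex combination $f_s$ used in the linearization. The contribution of the $\nu^2$ term depends only on $\nabla f$, so it enters only in the drift~$b^i$ and not in~$a^{ij}$; the principal part therefore inherits ellipticity directly from the Monge--Amp\`ere type term $(a+b)\det D^2 f$, and the sign $b>0$ together with $a+b\ne 0$ (treated in both sign cases, using the chosen orientation for which the second fundamental form is positive definite) is what guarantees the right sign for the cofactor matrix.
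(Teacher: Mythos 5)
Your proposal is correct and takes essentially the same route as the paper, which gives no detailed proof but only the preceding remark that, since $\nu$ depends just on first derivatives of the immersion, the Weingarten relation $K_e-\frac{c-a\epsilon\nu^2}{a+b}=0$ is an absolutely elliptic second-order PDE when $K_e>0$, so that Hopf's interior and boundary maximum principles apply (cited to Hopf's lecture notes). Your write-up merely supplies the standard details the paper leaves implicit: graphs $f_j$ over the common tangent plane in exponential coordinates, the linearization along the convex combination $f_s$ with positive definiteness of the averaged cofactor matrix, the sign discussion for $a+b$, and the absorption of the zero-order coefficient using $u\ge 0$, $u(0)=0$.
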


In order to state the horizontal height estimate, recall a vertical plane in $\h^2\times\R$ is the product $\gamma\times\R$ of a complete geodesic $\gamma\subset\h^2$ with the real line $\R$.

\begin{theorem}[Horizontal height estimates]\label{t7}
Let $\varphi:\Sigma \to \h^2\times\R$ be an isometric immersion. Suppose $\Sigma$ is a compact embedded W-surface having positive extrinsic curvature whose boundary is contained in a vertical plane $P$. Moreover, assume $a+b>0$ and $c>0$. Then the distance from $\Sigma$ to $P$ is bounded; i.e., there exists a constant $c_0$ depending on $a,b,c$, independent of $\Sigma$, such that
$$dist(q,P)\le c_0, \hspace{.2cm} \textnormal{for all} \hspace{.2cm} q\in\Sigma.$$
\end{theorem}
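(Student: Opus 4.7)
The plan is to use the rotational $W$-spheres of Proposition~\ref{p1} as interior barriers and to invoke the Hopf-type maximum principle of Theorem~\ref{t1} to derive a contradiction whenever $\Sigma$ reaches too far from $P$. Since $a+b>0$ and $c>0$, Proposition~\ref{p1} provides a complete rotational $W$-sphere $\mathcal{S}_0\subset\h^2\times\R$ with positive extrinsic curvature; let $c_0>0$ denote the hyperbolic diameter of the horizontal projection of $\mathcal{S}_0$, a quantity depending only on $a$, $b$, $c$. Let $\gamma=P\cap(\h^2\times\{0\})$ and let $\gamma^\perp$ be any geodesic of $\h^2$ perpendicular to $\gamma$. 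Composing the defining rotational motion with a hyperbolic translation along $\gamma^\perp$ and a vertical translation, I would build a two-parameter family $\{\mathcal{S}_{s,\tau}\}$ of congruent $W$-spheres, where $\mathcal{S}_{s,\tau}$ has rotation axis at horizontal distance $s$ from $P$ and centre at height $\tau$; each of them has horizontal extent $[s-c_0/2,\,s+c_0/2]$.

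To prove the bound I would argue by contradiction: suppose there exists $q_0\in\Sigma$ with $\mathrm{dist}(q_0,P)>c_0$. Because $\Sigma$ is compact, embedded, connected, and $\partial\Sigma\subset P$, it bounds together with a compact region of $P$ a domain $\Omega\subset\h^2\times\R$ on one side of $P$. I would orient $\Sigma$ by the unit normal $N$ pointing into $\Omega$; since $K_e>0$ and $\Sigma$ is locally the boundary of $\Omega$, this is the orientation for which the second fundamental form is positive definite. The hypothesis $\mathrm{dist}(q_0,P)>c_0$, together with the locally convex geometry of $\Sigma$, would let me place an initial copy $\mathcal{S}_{s_0,\tau_0}$ entirely inside $\Omega$ with $s_0>c_0/2$, so that this sphere is strictly disjoint from both $P$ and $\Sigma$. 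I would then slide the sphere continuously inside $\Omega$---varying $\tau$ or $s$---until first tangential contact with $\Sigma$ at parameters $(s^*,\tau^*)$ and a point $p\in\mathcal{S}_{s^*,\tau^*}\cap\Sigma$. Because the moving sphere stays in the open half-space at horizontal distance $>0$ from $P$ throughout the deformation, the contact point $p$ is interior to $\Sigma$. At $p$ the sphere lies on the $\Omega$-side of $\Sigma$, so the inward normals---for $\Sigma$ pointing into $\Omega$ and for $\mathcal{S}_{s^*,\tau^*}$ pointing toward its own centre---coincide. Theorem~\ref{t1} then forces $\Sigma=\mathcal{S}_{s^*,\tau^*}$ in a neighbourhood of $p$, and unique continuation for the analytic elliptic $W$-equation, together with connectedness of $\Sigma$, propagates this coincidence to yield $\varphi(\Sigma)\subset\mathcal{S}_{s^*,\tau^*}$. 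But $\mathcal{S}_{s^*,\tau^*}$ is a closed topological sphere disjoint from $P$, whereas $\varphi(\partial\Sigma)\subset P$ is non-empty: this is the desired contradiction.

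The main obstacle is the geometric step of actually fitting and then sliding the barrier inside $\Omega$: I would have to upgrade the pointwise hypothesis $\mathrm{dist}(q_0,P)>c_0$ to a quantitative statement about the width of $\Omega$ near $q_0$ in the direction $\gamma^\perp$, using the positive extrinsic curvature of $\Sigma$ to control how $\Sigma$ bends between $q_0$ and $P$; the eventual constant in the statement may then be a slightly larger multiple of the horizontal diameter of $\mathcal{S}_0$, still depending only on $a$, $b$, $c$. A secondary but essential subtlety, explaining why the sphere must sit \emph{inside} $\Omega$, is that an external tangency between two surfaces with $K_e>0$ would force opposite inward normals at the contact point, and Theorem~\ref{t1} could then not be applied.
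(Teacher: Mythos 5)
Your overall strategy --- comparing $\Sigma$ with the rotational $W$-sphere of Proposition \ref{p1} via the maximum principle of Theorem \ref{t1} and deriving the contradiction from $\partial\Sigma\subset P$ --- is exactly the mechanism the paper invokes: its ``proof'' of Theorem \ref{t7} is a one-paragraph citation stating that the argument of \cite[Theorem 6.2]{EGH} applies once the constant extrinsic curvature sphere is replaced by the rotational $W$-sphere $\mathcal{S}_c(a,b)$ of Section \ref{HSphere}. Your endgame (an interior, same-side tangency forces local, hence global, coincidence, which is impossible since the sphere is closed and disjoint from $P$ while $\varphi(\partial\Sigma)\subset P$ is non-empty) and your remark that an \emph{external} tangency is useless because the positive-definite orientations would then be opposite are both correct.

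The genuine gap is the step you yourself flag as ``the main obstacle'': getting the barrier inside $\Omega$ in the first place. From the pointwise hypothesis $\mathrm{dist}(q_0,P)>c_0$ alone you cannot place a congruent copy of the $W$-sphere inside $\Omega$ at horizontal distance greater than $c_0/2$ from $P$: positivity of $K_e$ makes $\Sigma$ locally convex, but it gives no lower bound on the width of $\Omega$ transverse to its reach --- at this stage nothing excludes $\Omega$ from being a long thin finger through which no sphere of horizontal diameter $c_0$ fits, and the bound $K_e=(c-\epsilon a\nu^2)/(a+b)$ does not by itself produce a width estimate comparable to $c_0$. Since your entire contradiction hinges on producing a first \emph{interior, same-side} contact, and such a contact only arises once the sphere sits inside $\Omega$, the argument never gets started; this is precisely the content that the cited proof of \cite[Theorem 6.2]{EGH} supplies by a controlled sweep of the comparison sphere (resting on reflection/graph-structure control of the far part of $\Sigma$, the same circle of ideas as the Plane Separation Lemma \ref{l3}), rather than by postulating interior room. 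Two smaller points: once a sphere is inside $\Omega$, the sliding should be done by vertical translations, since these preserve the distance to $P$ and so the first exit contact is automatically with $\Sigma$ and not with the planar part of $\partial\Omega$; and your claim that the positive-definite orientation of $\Sigma$ is the normal pointing into $\Omega$ is plausible at a farthest point but is asserted, not proved, globally. As written, then, the proposal reproduces the paper's comparison mechanism but leaves its decisive geometric step unproven.
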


Once the interior and boundary maximum principle hold for W-surfaces $\Sigma$ isometrically immersed in $\me$ having positive extrinsic curvature, the proof of \cite[Theorem 6.2]{EGH} applies to our setting with the exception that the proof use the maximum principle to compare $\Sigma$ to a surface $\Sigma_0$ that in our case is the rotational  topological sphere presented in Section \ref{HSphere}.

\section{Properly embedded W-surfaces with finite topology and one top end}\label{PES}

We begin this section with the following definition.

\begin{definition}\label{dd3}
Let  $\varphi:\Sigma \to \me$ be an isometric immersion from a complete surface. Then:
\begin{enumerate}
\item We say that $\Sigma$ has a top end $\mathcal{E}$ (respectively, a bottom end) if for any divergent sequence $\{q_j\}\subset\mathcal{E}$, the height function goes to $+\infty$ (respectively, $-\infty$).
\item For the case $\h^2\times\R$, we say that $\Sigma$ has a simple end if the boundary at infinity of the projection on the first factor $\pi_1(\Sigma)\subset\h^2\times\{0\}$ is a unique point $\theta_0$ and in addition, for each vertical plane $P$ whose boundary at infinity does not contain $\theta_0$, the intersection of $P$ and $\Sigma$ is either empty or a compact set. Here we are denoting by $\pi_1: \me \to  \m$, $\pi_1(p,t)=p$, the projection on the first factor; and as usual, we identify the base space $\m$ with its horizontal lift $\m\times\{0\}$.
\end{enumerate}
\end{definition}

Recall that, there is no properly embedded complete surface in $\h^2\times\R$ having positive constant extrinsic curvature with finite topology and one top (or bottom) end, see \cite[Theorem 7.2]{EGH}. In this section we extend this result to some W-surfaces.

For fixed real numbers $a$, $b$ and $c$ such that $a+b>0$ and $c>0$, we denote by $\mathcal{S}_c(a,b)$ the rotational topological sphere in $\h^2\times\R$ whose intrinsic and positive extrinsic curvatures satisfy the equation
\begin{equation}\label{e35}
a K_i+b K_e=c,
\end{equation} 
such rotational topological sphere was given in Section \ref{HSphere}. We denote by $c_1=2\kappa_0$ the horizontal diameter of $\mathcal{S}_c(a,b)$, where $\kappa_0$ satisfies
$$(\cosh \kappa_0)^{\frac{a}{a+b}}=\sqrt{\dfrac{a+c}{c}}.$$

%that is,
%$$c_1:=diam(\mathcal{S}_c(a,b))=\max\{dist_{\mathcal{S}_c(a,b)}(x,y);x,y\in\mathcal{S}_c(a,b)\},$$
%where $dist_{\mathcal{S}_c(a,b)}(x,y)$ denotes the intrinsic distance from $x$ to $y$ on $\mathcal{S}_c(a,b)$.

The following lemma extend the Plane Separation Lemma given in \cite[Lemma 2.4]{NR} to properly embedded W-surface having positive extrinsic curvature. Using the Maximum Principle (Theorem \ref{t1}), the proof of Lemma \ref{l3} is similar to the one of \cite[Lemma 2.4]{NR}, so we will not present a proof here.

\begin{lemma}[Plane Separation Lemma]\label{l3} Let $\varphi:\Sigma\ \to \h^2\times\R$ be an isometric  properly embedded W-surface having positive extrinsic curvature. Assume $\Sigma$ has finite topology and a top (or bottom) end. Moreover, suppose $a+b>0$ and $c>0$. Let $P_1^+$ and $P_2^+$ be two disjoint half-spaces determined by vertical planes $P_1$ and $P_2$, respectively. If the distance between $P_1$ and $P_2$ is larger than the horizontal diameter $c_1$ of the rotational topological sphere $\mathcal{S}_c(a,b)$. Then, either $\Sigma\cap P_1^+$ or $\Sigma\cap P_2^+$ consist entirely of compact components.

\end{lemma}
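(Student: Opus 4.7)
My plan is to follow the strategy of \cite[Lemma 2.4]{NR}, as the paper suggests, using the rotational topological $W$-sphere $\mathcal{S}_c(a,b)$ of Section \ref{HSphere} as a geometric barrier and Theorem \ref{t1} as the replacement for the classical maximum principle. I argue by contradiction: assume both $\Sigma\cap P_1^+$ and $\Sigma\cap P_2^+$ admit a non-compact connected component, $C_1$ and $C_2$ respectively. Since $\Sigma$ has finite topology and a single top end $\mathcal{E}$, outside a compact subset of $\Sigma$ each $C_i$ is contained in $\mathcal{E}$, so both $C_1$ and $C_2$ reach arbitrarily large heights; in particular $\mathcal{E}$ must traverse the open slab $\Omega:=\h^2\times\R\setminus(\overline{P_1^+}\cup\overline{P_2^+})$ at arbitrarily large heights.

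Next I exploit the numerical hypothesis: the horizontal width of $\Omega$ strictly exceeds $c_1$, the horizontal diameter of $\mathcal{S}_c(a,b)$. Consequently there is an isometric copy $\mathcal{S}_0$ of $\mathcal{S}_c(a,b)$ whose rotation axis is a vertical line in $\Omega$ and with $\mathcal{S}_0\subset\Omega$. I consider the one-parameter family $\{\mathcal{S}_t\}_{t\in\R}$ of vertical translates of $\mathcal{S}_0$; each $\mathcal{S}_t\subset\Omega$ and is therefore disjoint from $C_1\cup C_2$. Starting at $t$ so large that $\mathcal{S}_t$ lies above every point of $\Sigma\cap\Omega$ below some reference slice (possible by properness of $\Sigma$ together with the fact that $\mathcal{E}$ is a single top end), I slide $\mathcal{S}_t$ downward and set
\[
t^*:=\inf\{\,t\in\R:\mathcal{S}_s\cap\Sigma=\varnothing\text{ for every }s\ge t\,\}.
\]
Finiteness of $t^*$ is forced by the preceding observation that the end $\mathcal{E}$ crosses $\Omega$ at arbitrarily large heights, so every sufficiently low translate must meet $\Sigma$ inside $\Omega$.

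At the first contact height $t^*$ the surfaces $\mathcal{S}_{t^*}$ and $\Sigma$ are tangent at some interior point $p\in\Omega$, with $\mathcal{S}_{t^*}$ lying locally on one side of $\Sigma$. Both surfaces have positive extrinsic curvature and are oriented so that their second fundamental forms are positive definite, so their unit normals coincide at $p$. Theorem \ref{t1} then forces $\Sigma$ and $\mathcal{S}_{t^*}$ to agree in a neighborhood of $p$; the ellipticity and real-analyticity of the $W$-equation propagate this coincidence globally on the connected surface $\Sigma$, giving $\Sigma=\mathcal{S}_{t^*}$, in contradiction with the non-compactness of $\Sigma$.

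The step I expect to be the main obstacle is making rigorous that the first moment of contact produces an interior tangency and that $\mathcal{S}_t$ can be slid through the full height range without losing disjointness from $\Sigma$ prematurely (for instance via a stray compact component of $\Sigma\cap\Omega$). As in \cite{NR}, this is where the finite-topology hypothesis is essential: it allows one to view $\mathcal{E}$ outside a large compact set as a single annular end with controlled asymptotics, and combined with the horizontal height estimate (Theorem \ref{t7}) one bounds how far compact pieces of $\Sigma\cap\Omega$ can extend transversally to the sliding direction. Everything else is a direct transcription of the arguments of \cite[Theorem 7.2]{EGH} and \cite[Lemma 2.4]{NR} to the $W$-surface context, with the rotational $W$-sphere $\mathcal{S}_c(a,b)$ replacing the constant extrinsic curvature sphere as the barrier.
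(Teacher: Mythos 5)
The paper never writes this proof out: it explicitly defers to \cite[Lemma 2.4]{NR}, remarking only that Theorem \ref{t1} supplies the maximum principle and that the barrier is the rotational sphere $\mathcal{S}_c(a,b)$. Your sliding-sphere outline is therefore the right strategy in spirit, but your implementation of the sliding has a genuine error. Since $\Sigma$ has finite topology and a single top end, its height function is bounded from below ($\Sigma$ minus the end is compact, and $h\to+\infty$ on the end), so your claim that ``every sufficiently low translate must meet $\Sigma$ inside $\Omega$'' is false, and your $t^*$ may be $-\infty$, i.e.\ no contact ever occurs. Symmetrically, your own first paragraph shows $\Sigma$ crosses the slab at arbitrarily large heights, so the existence of a high starting position disjoint from $\Sigma$ is also unjustified. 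More fundamentally, for an \emph{arbitrarily chosen} vertical axis in $\Omega$ nothing forces the compact spheres $\mathcal{S}_t$ to meet $\Sigma$ at any height: the slab is horizontally unbounded in the direction parallel to $P_1$ and $P_2$, and the arcs of $\Sigma$ joining $C_1$ to $C_2$ may cross it arbitrarily far from your axis. The NR-type argument chooses the axis through an actual crossing point: if $\delta\subset\h^2$ is the geodesic orthogonal to the common perpendicular of $P_1,P_2$ at its midpoint, then every point of $\delta$ is at distance $>c_1/2$ from both planes (so spheres with axis on $\delta\times\R$ stay inside the slab), and $\delta\times\R$ separates $P_1^+$ from $P_2^+$, so every arc of $\Sigma$ from $C_1$ to $C_2$ at height $>T$ crosses $\delta\times\R$ at height $>T$. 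One places the sphere on the vertical axis through such a crossing point, starts \emph{below} $\Sigma$ (possible precisely because $h$ is bounded below on $\Sigma$ --- the fact your argument missed), and slides \emph{upward}; first contact is then guaranteed because the crossing point lies on the axis and the sphere eventually engulfs it.

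A second gap is the sentence ``both surfaces have positive extrinsic curvature and are oriented so that their second fundamental forms are positive definite, so their unit normals coincide at $p$.'' This is a non sequitur: two locally strictly convex surfaces tangent at a point, one lying locally on one side of the other, can be \emph{externally} tangent, with the two canonically oriented normals opposite (think of two round spheres touching from outside), in which case Theorem \ref{t1} gives no conclusion. Showing that the first contact produced by the upward sliding is an internal tangency --- for $t$ below the contact value the closed ball bounded by $\mathcal{S}_t$ is disjoint from $\Sigma$, so $\Sigma$ touches from outside the ball, and one must exclude the configuration in which $\Sigma$ lies locally on the far side of the common tangent plane --- is precisely the delicate step of the NR argument, and it is not the step you flagged: stray compact components of $\Sigma\cap\Omega$ are harmless for a first-contact argument (the sphere simply touches whatever piece of $\Sigma$ it reaches first), and Theorem \ref{t7} plays no role in this lemma at all; in the paper it is used only afterwards, in Theorem \ref{t4}. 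Your final continuation step (local equality from Theorem \ref{t1} propagated by an open--closed argument to give $\Sigma=\mathcal{S}_{t^*}$, contradicting noncompactness) is fine once a matching-normal tangency is in hand.
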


As a consequence from Plane Separation Lemma and horizontal height estimates, we have the following theorem.
\begin{theorem}\label{t4}
 Let $\varphi:\Sigma\ \to \h^2\times\R$ be an isometric  immersion.  Assume $\Sigma$ is a complete W-surface having positive extrinsic curvature and finite topology with a top (or a bottom) end. Moreover, suppose $a+b>0$ and $c>0$. Then $\Sigma$ is contained in a vertical cylinder $\alpha\times\R$ in $\h^2\times\R$, where $\alpha\subset\h^2\times\{0\}$ is a circle.
\end{theorem}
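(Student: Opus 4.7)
The plan is to argue by contradiction, combining the Plane Separation Lemma (Lemma \ref{l3}) with the horizontal height estimate (Theorem \ref{t7}). Suppose $\Sigma$ is not contained in any vertical cylinder $\alpha \times \R$ over a disk of $\h^2$; equivalently, the horizontal projection $\pi_1(\Sigma) \subset \h^2$ is unbounded. I will exhibit disjoint half-spaces $P_1^+, P_2^+$, bounded by vertical planes $P_1, P_2$ at mutual distance strictly greater than $c_1$, such that each intersection $\Sigma \cap P_i^+$ contains a non-compact connected component. This contradicts Lemma \ref{l3} and forces $\pi_1(\Sigma)$ to be bounded.

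To carry this out, let $c_0 > 0$ be the horizontal height constant from Theorem \ref{t7} and $c_1 = 2\kappa_0 > 0$ the horizontal diameter of the rotational sphere $\mathcal{S}_c(a,b)$. Since $\pi_1(\Sigma)$ is unbounded in $\h^2$, there exist $q_1, q_2 \in \Sigma$ with $d_{\h^2}(\pi_1(q_1), \pi_1(q_2)) > 2c_0 + c_1 + 1$. Let $\gamma \subset \h^2$ be the geodesic through $\pi_1(q_1)$ and $\pi_1(q_2)$, and for each $i$ let $\gamma_i^{\perp}$ be the geodesic perpendicular to $\gamma$ at the point of the segment from $\pi_1(q_1)$ to $\pi_1(q_2)$ that lies at distance $c_0 + \tfrac{1}{4}$ from $\pi_1(q_i)$. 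Setting $P_i := \gamma_i^{\perp} \times \R$, standard hyperbolic geometry (two geodesics perpendicular to a common geodesic at distinct points are ultraparallel with distance equal to the $\gamma$-distance between their feet) yields that $P_1, P_2$ are vertical planes whose mutual distance equals $d_{\h^2}(\pi_1(q_1), \pi_1(q_2)) - 2c_0 - \tfrac{1}{2} > c_1$. Denoting by $P_i^+$ the half-space bounded by $P_i$ containing $q_i$, we have $P_1^+ \cap P_2^+ = \varnothing$ and $d(q_i, P_i) > c_0$ for $i=1,2$.

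Let $C_i$ denote the connected component of $\Sigma \cap P_i^+$ containing $q_i$. Since $\Sigma$ is properly embedded, $\partial C_i \subset P_i$. If $C_i$ were compact, it would be a compact embedded W-surface of positive extrinsic curvature with boundary on the vertical plane $P_i$, so by Theorem \ref{t7} every point of $C_i$ would lie at distance at most $c_0$ from $P_i$, contradicting $d(q_i, P_i) > c_0$. Hence $C_1$ and $C_2$ are both non-compact, and Lemma \ref{l3} is violated. Therefore $\pi_1(\Sigma)$ must be bounded, hence contained in some closed geodesic disk of $\h^2$ whose boundary is a circle $\alpha$; consequently $\Sigma$ lies in the vertical cylinder $\alpha \times \R$, as claimed.

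The essential content of the argument is the interplay of the three tools previously developed: properness with finite topology and one top or bottom end (needed to invoke Lemma \ref{l3}), the horizontal height estimate of Theorem \ref{t7}, and the existence of the rotational barrier $\mathcal{S}_c(a,b)$ whose diameter provides $c_1$. The main obstacle is purely geometric bookkeeping: verifying that the two planes can be placed so as to simultaneously satisfy the separation condition $d(P_1, P_2) > c_1$ and the height-estimate condition $d(q_i, P_i) > c_0$. Everything else is formal, provided one checks that $C_i$ truly inherits the hypotheses of Theorem \ref{t7} whenever it is compact, which follows from $\Sigma$ being embedded and properly immersed.
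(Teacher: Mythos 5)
Your proposal is correct and follows essentially the same route as the paper: both arguments combine the Plane Separation Lemma (Lemma \ref{l3}) with the horizontal height estimate (Theorem \ref{t7}) via the observation that a compact component with boundary on a vertical plane must lie within $c_0$ of that plane --- the paper merely organizes this as a direct sweep of planes $P(s)$ along geodesics from a fixed point of $\Sigma$, yielding the explicit cylinder radius $c_2=2\max\{c_0,c_1\}+c_0$, whereas you run the equivalent argument by contradiction from two horizontally distant points. The only small addition needed is to justify, as the paper does at the outset, that $\Sigma$ is properly embedded by \cite[Theorem 3.1]{EGH} (it is not a hypothesis of the theorem), since you use this both to conclude $\partial C_i\subset P_i$ and to invoke Lemma \ref{l3}.
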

\begin{proof} First, observe that, since $\Sigma$ has positive extrinsic curvature then $\Sigma$ is properly embedded, see \cite[Theorem 3.1]{EGH}. 
We take the disk model for the hyperbolic plane $\h^2$. Up to an isometry of the ambient space, we can assume that the point $\mathcal{O}=(\mathbf{0},0)$ belongs to $\Sigma$, here $\mathbf{0}$ denotes the origin of $\h^2$. Let $\gamma:[0,+\infty)\to \h^2\times\R$ be any horizontal geodesic starting at $\mathcal{O}$ parameterized by  arc length. We denote by $P(s)$, $s\in [0,+\infty)$, the vertical plane passing through $\gamma(s)$ orthogonal to $\gamma$.

$\mathbf{Claim}.-$ There exists a constant $c_2$, independent of $\gamma$, such that, if $s_0>c_2$, then the half-space determined by $P(s_0)$ that does not contain the point $\mathcal{O}$ is disjoint from $\Sigma$.

$\mathbf{Proof\hspace{.1cm} of \hspace{.1cm}the \hspace{.1cm}Claim}.-$We choose $R>\max\{c_0,c_1\}$, where $c_0$ and $c_1$ are the constant given by Theorem \ref{t7} and the Plane Separation Lemma, respectively. Denote by $P^+(R)$ the half-space determined by $P(R)$ containing the point $\mathcal{O}$ and by $P^-(2R)$ the half-space determined by $P(2R)$ which does not contain the point $\mathcal{O}$. By the Plane Separation Lemma applied to $\Sigma$, we have
\begin{itemize}
\item[a)] $\Sigma\cap P^+(R)$ has only compact components, or
\item[b)] $\Sigma\cap P^-(2R)$ has only compact components.
\end{itemize} 
By Theorem \ref{t7}, if $a)$ were true, the distance between the plane $P(R)$ and the point $\mathcal{O}\in \Sigma\cap P^+(R)$ would be at most $c_0$. Once this horizontal distance is $R>c_0$, $a)$ cannot occur. So $b)$ holds. Again, by Theorem \ref{t7}, the maximum distance between $\Sigma\cap P^-(2R)$ and the plane $P(2R)$ is at most $c_0$; hence $\Sigma$ is disjoint from the half-space determined by $P(2R+c_0)$ which does not contain the point $\mathcal{O}$. Choosing the constant $c_2=2\max\{c_0,c_1\}+c_0$, the Claim is proved.

The claim guarantees that $\Sigma$ is contained in the vertical cylinder $\alpha\times\R$, here $\alpha$ is a circle centered at the origin of $\h^2\times\{0\}$ having radius $c_2$.
\end{proof}
We finalize this section with a non-existence theorem.
\begin{theorem}\label{t5}
There is no complete W-surface having positive extrinsic curvature, with $a+b>0$, $2a+b\neq0$ and $c>0$, having finite topology and one top (or bottom) end in the product space $\h^2\times\R$. 
\end{theorem}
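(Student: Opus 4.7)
The plan is to argue by contradiction using an Alexandrov-type reflection with respect to horizontal slices, combined with the containment in a vertical solid cylinder given by Theorem \ref{t4}. Suppose such a $\Sigma$ exists. By Theorem \ref{t4}, $\Sigma\subset \bar D\times\R$ for a closed hyperbolic disk $\bar D$. By the classification result [EGH, Theorem 3.1] already cited, any complete immersed surface of positive extrinsic curvature in $\h^2\times\R$ is homeomorphic to $\s^2$ or $\R^2$; since $\Sigma$ has an end, it is a topological plane with exactly one (topological) end, namely the assumed top end $E$. In particular $h_0:=\inf_\Sigma h>-\infty$.

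Now set $\Sigma_t^-:=\Sigma\cap\{h\le t\}$ and $\Sigma_t^+:=\Sigma\cap\{h\ge t\}$, and let $\widetilde{\Sigma}_t^-$ be the reflection of $\Sigma_t^-$ across the horizontal plane $P_t:=\h^2\times\{t\}$. Since this reflection is an ambient isometry of $\h^2\times\R$, the surface $\widetilde{\Sigma}_t^-$ is again a $W$-surface of positive extrinsic curvature, and it meets $\Sigma_t^+$ only along the common boundary $\Gamma_t:=\Sigma\cap P_t$. For $t$ just above $h_0$, the piece $\Sigma_t^-$ is a small strictly convex cap (strictly convex because $K_e>0$), so $\widetilde{\Sigma}_t^-$ lies strictly on one side of $\Sigma_t^+$. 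Let $t^*$ be the supremum of those $t$ for which this strict one-sided configuration persists without interior or tangential contact. As $t$ grows, the reflected piece $\widetilde{\Sigma}_t^-$ becomes taller while $\Sigma_t^+$ shrinks to a thin upper portion of the end $E$, both confined to the bounded cylinder $\bar D\times[t,+\infty)$; a horizontal slice comparison of the two topological disks spanning $\Gamma_t$ in this bounded solid cylinder forces $t^*<\infty$.

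At the critical height $t^*$, either an interior point of $\widetilde{\Sigma}_{t^*}^-$ meets $\Sigma_{t^*}^+$ tangentially with coinciding unit normals, or the tangency first occurs along the common boundary $\Gamma_{t^*}$ with matching normals and boundary tangent lines. In either case, the Hopf Maximum Principle (Theorem \ref{t1}) applied to these two $W$-surfaces carrying the orientation for which $II$ is positive definite forces local coincidence near the contact point; invoking analyticity of solutions of the fully nonlinear elliptic Weingarten equation in the region $K_e>0$ propagates this to global equality on the maximal analytic piece. Consequently $\Sigma$ must be symmetric under reflection through $P_{t^*}$. But then the top end $E$ would mirror to an end going to $-\infty$, contradicting $h\ge h_0>-\infty$ on $\Sigma$.

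The main obstacle is the step forcing $t^*<\infty$: one must show rigorously that two topological disks $\widetilde{\Sigma}_t^-$ and $\Sigma_t^+$ sharing boundary $\Gamma_t$ inside a bounded solid cylinder cannot remain strictly disjoint as $t$ increases indefinitely. A cleaner alternative in the spirit of the constant extrinsic curvature case [EGH, Theorem 7.2] is to replace $\widetilde{\Sigma}_t^-$ by vertical translates of the rotational $W$-sphere $\mathcal{S}_c(a,b)$ constructed in subsection \ref{HSphere}: slide such a sphere down the axis of the cylinder (whose radius $c_2$ exceeds the horizontal radius $c_1/2$ of $\mathcal{S}_c(a,b)$, by the proof of Theorem \ref{t4}) until it first touches the top end of $\Sigma$, and apply Theorem \ref{t1} at the contact point to identify $\Sigma$ with a vertical translate of $\mathcal{S}_c(a,b)$, a closed surface, contradicting the presence of the top end. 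I would adopt whichever variant makes the forcing of first contact easiest to verify.
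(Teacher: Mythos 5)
There is a genuine gap, and it sits exactly at the step you yourself flag: forcing $t^*<\infty$. In fact the opposite is true under the contradiction hypothesis, and the paper exploits this. If the reflected piece $\widetilde{\Sigma}_t^-$ ever touched $\Sigma_t^+$ (at an interior point, or along $\Gamma_t$ with matching tangent data), Theorem \ref{t1} would make $\Sigma$ symmetric with respect to the slice $E(t)$ --- impossible, by precisely the mirror-end argument you give at the end of your third paragraph, since $\Sigma$ has one top end and is bounded below. So no contact can occur for \emph{any} $t$, i.e.\ $t^*=\infty$, and your proposed ``horizontal slice comparison of the two topological disks'' cannot be made to work: two disks sharing the boundary $\Gamma_t$ inside the solid cylinder can remain disjoint indefinitely (think of a shallow reflected cap nested inside an end that opens toward the cylinder wall), and there is no topological obstruction to this. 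The paper turns the absence of contact into the contradiction via an ingredient your proposal never invokes: the vertical height estimate, Theorem \ref{avhe}. Since the Alexandrov procedure never encounters an accident, for every $t>0$ the part of $\Sigma$ below $E(t)$ is a compact vertical graph with boundary in the slice $E(t)$; Theorem \ref{avhe} bounds the height of any such graph by a constant $C_0$ depending only on $a,b,c$ (and $K_e$), so choosing $t>C_0$ finishes the proof. Without that estimate your argument has no mechanism to produce a contradiction.

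Your fallback variant --- sliding the rotational sphere $\mathcal{S}_c(a,b)$ down the axis until first contact --- has the same unproven forcing step in disguise. The end of $\Sigma$ may hug the cylinder wall while the sphere descends along the axis, so a first touching point need not exist at a configuration where Theorem \ref{t1} applies: the hypothesis there requires the two unit normals (each chosen so that $II$ is positive definite) to \emph{coincide} at the contact point and one surface to lie above the other in the corresponding exponential coordinates, and a first contact produced by a continuity argument can just as well occur with opposite normals, where the comparison yields nothing. This is why the paper reserves sphere comparisons for the horizontal estimates (Theorem \ref{t7}, via \cite[Theorem 6.2]{EGH}) and the Plane Separation Lemma \ref{l3}, whose conclusions feed into Theorem \ref{t4} (the cylinder containment you correctly use), and then completes Theorem \ref{t5} with reflection plus Theorem \ref{avhe} rather than with a sliding-sphere first-contact argument.
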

\begin{proof}
Assume by contradiction that there exists such a W-surface $\Sigma$ satisfying the hypothesis. Let $E(t)=\h^2\times\{t\}$ denote the horizontal slice at height $t$. By Theorem \ref{t4}, $\Sigma$ is contained in a vertical cylinder and since it has only one top (or bottom) end, $\Sigma$ is bounded from either above or below. Up to an isometry of the ambient space, we can assume that $\Sigma$ is bounded from below and a lowest point of $\Sigma$ lie in the slice $E(0)$. As reflections with respect to the slices $E(t)$ are isometries of $\h^2\times\R$, we perform Alexandrov reflection to $\Sigma$ with respect to the slice $E(t)$ for $t>0$. Since $\Sigma$ is contained in a vertical cylinder, the maximum principle ensures that no accident can occur moving $E(t)$ up, that is, for all $t\ge 0$, there is no point $p\in\Sigma\cap E(t)$ such that $\Sigma$ is orthogonal to $E(t)$. Moreover, denoting by $\Sigma^*_t$ the reflection of $\Sigma\cap(\h^2\times[0,t])$ with respect $E(t)$, by the maximum principle, for all $t>0$, there is no contact point between $\Sigma^*_t$ and $\Sigma\cap(\h^2\times(t,+\infty))$. Hence, for any $t>0$,  the part of $\Sigma$ below $E(t)$ is a vertical graph. But one can choose $t$ larger enough, so that we have a contradiction with Theorem \ref{avhe}.
\end{proof}

\section{The Main Theorem.}\label{SMT}
This last section is devoted to prove the main theorem.
\begin{theorem}\label{t6}
 Let $\varphi:\Sigma\ \to \me$ be an isometric  immersion.  Assume $\Sigma$ is a complete W-surface having positive extrinsic curvature. We suppose that $2a+b$ is different from zero, then $\Sigma$ is a topological rotational sphere described on Section \ref{HSphere} if 
\begin{enumerate}
\item[\emph{(A1)}] either $a+b>0$ and $c>0$,
\item[\emph{(A2)}] or  for $a+b<0$, 
\begin{enumerate}
\item[\emph{(i)}] $\epsilon=1$ and $c<-b$;
\item[\emph{(ii)}] $\epsilon=-1$ and $c<0$.
\end{enumerate}
\end{enumerate}
\end{theorem}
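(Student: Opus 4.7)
The strategy is three-fold: first show that, under the given hypotheses, $\Sigma$ is necessarily compact (hence a topological sphere); then apply Proposition \ref{ap3} to deduce $Q\equiv0$; finally exploit the vanishing of $Q$ together with the Weingarten equation to identify $\Sigma$, up to an ambient isometry, with the rotational sphere $\mathcal{S}_c(a,b)$ constructed in Section \ref{HSphere}.

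For the first step, recall that by \cite[Theorem 3.1]{EGH} any complete immersion with $K_e>0$ in $\me$ is homeomorphic either to $\s^2$ or to $\R^2$. In all cases of the hypothesis except (A1) with $\epsilon=-1$ in the range $0<c\le b$, compactness of $\Sigma$ follows directly from Lemma \ref{c12}, forcing $\Sigma$ to be a topological sphere. In the remaining subcase, assume for contradiction that $\Sigma$ is homeomorphic to $\R^2$; then $\Sigma$ is a properly embedded complete W-surface of finite topology with a single end. Since positive extrinsic curvature forces strict local convexity and rules out purely horizontal (simple) ends for a topological disk, this end is necessarily a top or a bottom end, contradicting Theorem \ref{t5}.

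Since $a+b\neq 0$ (built into (A1) and (A2)) and $2a+b\neq 0$ by hypothesis, Proposition \ref{ap3} applies and yields $Q\equiv 0$ on $\Sigma$. The vanishing of $Q$ is the pointwise identity $E=-f(1-\nu^2)\,h_z^2$ in every conformal parameter $z$ of the second fundamental form. Combining this with \eqref{ae5}, \eqref{ae27}, and the Weingarten relation \eqref{ae1}, the pair $(h,\nu)$ along $\Sigma$ is governed by exactly the ODE system that characterizes the rotational profile $\alpha_\epsilon$ of Section \ref{HSphere}, with initial data matching those of the sphere $\mathcal{S}_c(a,b)$ at a maximum of the height (where $\nu^2=1$). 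The fundamental theorem of surface theory in $\me$ \cite{Benoit} then implies that $\Sigma$ coincides, up to an ambient isometry, with $\mathcal{S}_c(a,b)$, which is a rotational topological sphere.

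The main obstacle I anticipate is this final step: passing from the pointwise identity $Q=0$ to the global conclusion that $\Sigma$ is rotational. This requires locating the axis of rotation (at the degeneracy points where $\nu^2=1$), integrating the resulting ODE system globally on the sphere $\Sigma$ so as to recover the profile curve, and then invoking the uniqueness of isometric immersions with prescribed fundamental data. A secondary technical point is ruling out the $\R^2$ topology in the $0<c\le b$ subcase of (A1) in $\h^2\times\R$, where one must analyze how a strictly convex topological disk can end in $\h^2\times\R$ in order to invoke Theorem \ref{t5}.
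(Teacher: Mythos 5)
Your first two steps coincide with the paper's proof: sphere topology comes from Lemma \ref{c12} in every case except the subcase $\epsilon=-1$, $a+b>0$, $0<c\le b$, which is handled by Theorems \ref{t7} and \ref{t5}, and then $Q\equiv 0$ follows from Proposition \ref{ap3}. Even in step one, note that your claim that strict convexity ``rules out purely horizontal (simple) ends for a topological disk'' is asserted rather than proved; in the paper this is precisely the job of the Plane Separation Lemma together with the horizontal height estimates (Theorem \ref{t4}, which confines $\Sigma$ to a vertical cylinder, so that a noncompact proper end must be a top or bottom end). You flag this as a secondary point, and it is, but it is not free.

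The genuine gap is your third step, which you yourself identify as ``the main obstacle'': you assert, without derivation, that $Q\equiv 0$ combined with \eqref{ae1}, \eqref{ae5} and \eqref{ae27} forces $(h,\nu)$ to satisfy the rotational profile ODE of Section \ref{HSphere}, and then appeal to Daniel's uniqueness theorem. As written this is circular: equations \eqref{ae1}, \eqref{ae5}, \eqref{ae27} hold on \emph{every} W-surface, and the pointwise identity $E=-f(1-\nu^2)h_z^2$ does not by itself yield an ODE in one variable --- a priori $h$ and $\nu$ depend on two variables, and nothing yet says the level sets of $h$ are orbits of an isometry group. The paper supplies the missing mechanism: in doubly orthogonal coordinates $(u,v)$ for the pair $(I,II)$, $Q\equiv 0$ means the quadratic form $A$ of \eqref{ae21} is conformal to $II$, which forces $h_uh_v=0$; taking $h_u=0$, the compatibility equations \eqref{ae51}--\eqref{ae55} give $\nu_u=0$, ${\bf G}_u=0$, $(\kappa_1)_u=(\kappa_2)_u=0$ and the separation ${\bf E}={\bf E}_1(u){\bf E}_2(v)$, so that after a change of variables all the fundamental data depend on a single parameter; Daniel's theorem then yields invariance under a one-parameter group of ambient isometries, and it is the already-established sphere topology (not the points where $\nu^2=1$) that forces this group to be the rotation group rather than a translational group along a horizontal geodesic or a parabolic group. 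Finally, you never address open sets of umbilical points, where curvature-line coordinates are only available by convention and where the paper's argument takes a different form: there $Q\equiv 0$ forces $h_v\equiv 0$, so the surface would lie in a slice, contradicting $K_e>0$. Without these ingredients your final step is an outline of the statement to be proved, not a proof of it.
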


\begin{proof}
From \cite[Theorem 3.1]{EGH} and \cite[Theorem 2.4]{RE}, once $\Sigma$ has positive extrinsic curvature, $\Sigma$ is either homeomorphic to $\real^2$ or homeomorphic to $\s^2$. So, using  Lemma \ref{c12}, Theorem \ref{t7} and Theorem \ref{t5} if (A1) or (A2) is satisfied then $\Sigma$ is a topological sphere.  As a consequence,  Proposition \ref{ap3} says that the quadratic differential form $Q$ vanishes identically over $\Sigma$. 

Let $(u,v)$ be a local doubly orthogonal coordinates for the first and second fundamental form, in these coordinates
\[ I= {\bf E} \, du^2+ {\bf G}\, dv^2 \]
 \[II= \kappa_1{\bf E}\, du^2+ \kappa_2{ \bf G}\,  dv^2,\]
where $\kappa_1, \kappa_2$ are the principal curvatures of $\Sigma$. These coordinates are available on the interior of the set of umbilical points and also on a neighborhood of non umbilical points. So,  the set of  points where the coordinates $(u,v)$ are available is dense on $\Sigma$, thus,  properties obtained on this set are extended to $\Sigma$ by continuity.  

Since $Q$ vanishes on $\Sigma$, the quadratic form $A$, defined on \eqref{ae21} is conformal to  the second fundamental form $II$. It implies that   $h_u h_v=0$. Without lost of generality, we may assume that $h_u=0$ in the neighborhood where $(u,v)$ is available.  Then, since $A$ and $II$ are conformal and $h_u=0$,     
\begin{equation}
\label{ae50} 
A={\bf E} \, du^2+( {\bf G}+ h_v^2) \, dv^2=\dfrac{1}{\kappa_1}\, II.
\end{equation}

First, we prove that $\Sigma$ is invariant under a one parameter group of isometries. Second, we show that an orbit of this one parameter group of isometry  is a circle.   

Proceeding as in Lemma \ref{al1}, we write some compatibility equations with respect to the coordinates $(u,v)$, and we obtain
\begin{eqnarray}
\label{ae51} \nu_u&=&0\\[5pt]
\label{ae53}\dfrac{{\bf E}_v}{2{\bf E}}(\kappa_2-\kappa_1)&=&\epsilon \nu h_v +(\kappa_1)_v\\[5pt]
 \label{ae54}\dfrac{{\bf G}_u}{2{\bf G}}(\kappa_2-\kappa_1)&=&-(\kappa_2)_u\\[5pt]
 \label{ae55}h_{uv}&=& 0 = \dfrac{{\bf G}_u}{2{\bf G}} h_v.
\end{eqnarray}

From  equation \eqref{ae51} we obtain that $\nu$ does not depend on $u$. As the extrinsic curvature of $\Sigma$ is positive, no open neighbourhood of $\Sigma$ is contained in a slice, therefore, $h_v$ does not vanish in any open set where $(u,v)$ are available, then equation \eqref{ae55} implies $G_u=0$. Thus, by the Codazzi equation \eqref{ae54},   $(\kappa_2)_u=0$. Since the extrinsic curvature is $K_e=\dfrac{c-\epsilon a \nu^2}{a+b}$ and $\nu$ does not depend on $u$ neither does  $K_e$.  On the other hand, $K_e=\kappa_1 \, \kappa_2$ which implies that $(\kappa_1)_u=0$. 

The variables $(u,v)$ are available in the interior set of umbilical points and on a neighborhood of non umbilical points. Let us assume for a moment that we are working on a neighbohoof free of umbilical points. Then,  by the Codazzi equation \ref{ae53}, we may write  ${\bf E}={\bf E}_1(u){\bf E}_2(v)$. Considering the new variables
\[
x:= \sqrt{{\bf E}_1(u)}\,du  \hspace{2cm} y:=v,
\]
we conclude that the first and second fundamental forms of $\Sigma$, $h$ and $\nu$ depend only on $y$. Then,  $\varphi(x,y)$ and $\varphi(x+x_0,y)$ only differ by an isometry of the ambient space, in other words,  the immersion is invariant under one parameter group of isometries of the ambient space, given by the transformation $(x,y)\mapsto (x+t,y)$, see \cite{Benoit}. Once we know that $\Sigma$ is a topological sphere, we conclude that $\Sigma$ is invariant by the group of rotations of $M^2(\epsilon)$.

It remains to analyse the case where the coordinates $(u,v)$ are defined on a neighbourhood in the interior of umbilical points. In this case, 
\begin{eqnarray*}
I&=& {\bf E} du^2+{\bf G} dv^2\\
II&=& \kappa_1( {\bf E} du^2+{\bf G} dv^2)\\
A&=&  {\bf E} du^2+ ({\bf G} + f(1-\nu^2)\, h_v^2)dv^2= \dfrac{1}{\kappa_1} II.
\end{eqnarray*}
 In particular,  ${\bf G} + f(1-\nu^2) h_v^2= {\bf G}$ which implies that $h_v$ vanishes identically in this neighborhood. Then, once we are working on a neighborhood of umbilical points we conclude that the height function is constant, which implies that $\Sigma$ is contained in a slice. This gives a contradiction, since the extrinsic curvature of the surface is positive. Then, there is no such a neighborhood of umbilical points. 
\end{proof}

\vspace{1cm}

\begin{tabular}{l|l}
Abigail Folha  &  Carlos Pe\~{n}afiel\\
abigailfolha@vm.uff.br & penafiel@im.ufrj.br\\
Universidade Federal Fluminense\ &  Universidade Federal de Rio de Janeiro\\
Instituto de Matem\'{a}tica e Estat\'{i}stica & Instituto de Matem\'{a}tica e Estat\'{i}stica\\

Departamento de Geometria & Departamento de M\'{e}todos Matem\'{a}ticos \\

R. M\'{a}rio Santos Braga, s/n   &  Av. Athos da Silveira Ramos 149\\
Campus do Valonguinho & CT, bl. C, Cidade Universit\'{a}ria\\
CEP 24020-140  &  CEP 21941-909\\
Niter\'{o}i, RJ - Brasil. & Rio de Janeiro, RJ - Brasil.\\

\end{tabular}

\end{document}